\newtheorem{defn}{Definition}[section]
\newtheorem{theorem}{Theorem}[section]
\newtheorem{corollary}[theorem]{Corollary}
\newtheorem{lemma}[theorem]{Lemma}
\newtheorem{prop}[theorem]{Proposition}
\newtheorem{example}[theorem]{Example}
\newcommand{\frakm}{\mathfrak{m}}
\DeclareMathOperator{\Hom}{Hom}
\DeclareMathOperator{\Ext}{Ext}
\DeclareMathOperator{\Dim}{dim}
\title{A Note on the Multiplicites of The Determinantal Thickenings of Maximal Minors}
\author{Jiamin Li}
\address{Department of Mathematics, Statistics, and Computer Science, University of Illinois at Chicago,
Chicago, IL 60607}
\email{jli283@uic.edu}
\subjclass[2020]{}
\begin{document}

\maketitle

\begin{abstract}
	Let $S=\mathbb{C}[x_{ij}]_{1\leq i \leq m, 1\leq j\leq n}$ be a polynomial ring of $m\times n$ variables over $\mathbb{C}$ and let $I$ be the determinantal ideal of maximal minors of $S$. Using the representation theoretic techniques introduced in the work of Raicu, Weyman and Witt, we prove the existence of the generalized $j$-multiplicities $\epsilon^j(I)$ defined by Dao and Monta\~{n}o in \cite{DaoMontano19}. We will also give a closed formula of $\epsilon^j(I)$, which generalized the results in the work by Jeffries, Monta\~{n}o and Varbaro the work by Kenkel in the maximal minors case.
\end{abstract}

\section{Introduction}
The Hilbert-Samuel multiplicity (see \cite[Ch4]{BrunsHerzog} for more detailed discussion) has played an important role in the study of commutative algebra and algebraic geometry. The attempt of its generalization can be traced back to the work of Buchsbaum and Rim \cite{BuchsbaumRim} in 1964. One of the more recent generalizations is defined via the $0$-th local cohomology (see for example \cite{KatzValidashti}). 

In \cite{JeffriesMontanoVarbaro} the authors studied this generalized multiplicities of several classical varieties, in particular they calculated the formula of the generalized multiplicities of determinantal ideals of non-maximal minors. A further generalized multiplicity is defined in \cite{DaoMontano19} via the local cohomology of other indices, which is necessary in some situations, e.g. the determinantal ideals of maximal minors. However, the existence of such multiplicity is not known in general. In the unpublised work \cite{Kenkel} the author calculated the closed formula, and thus showed the existence, of the generalized $j$-multiplicity defined in \cite{DaoMontano19} of determinantal ideals of maximal minors of $m\times 2$ matrices. The goal of this note is to generalize the aforementioned results to the determinantal ideals of maximal minors of $m \times n$ matrices for any $m>n>1$.

We first recall the definition and the result of the multiplicities of the $0$-th local cohomology from \cite{JeffriesMontanoVarbaro}.
\begin{defn}\label{defn_dm}(see \cite{DaoMontano19} for more details)
	Let $S$ be a Noetherian ring of dimension $k$ and $\frakm$ a maximal ideal of $S$. Let $J$ be an ideal of $S$. Define
$$\epsilon_+^j(J) := \limsup_{D\rightarrow\infty}\dfrac{k!\ell(H^j_\frakm(S/J^D))}{D^k}.$$
	Suppose $\ell(H^j_\frakm(S/J^D)) < \infty$, then we define
	$$\epsilon^j(J) := \lim_{D \rightarrow \infty} \dfrac{k!\ell(H^j_\frakm(S/J^D))}{D^{k}}$$ if the limit exist. 
\end{defn}
One of the results by Jeffries, Monta\~{n}o and Varbaro in their paper is the following.
\begin{theorem}(See \cite{JeffriesMontanoVarbaro})\label{0_multiplicities_result}
	Let $I_p$ be the determinantal ideal of $p \times p$-minors of $S$ where $S$ is a polynomial ring of $m \times n$ variables over $\mathbb{C}$ and $0<p<n\leq m$. Then $$\epsilon^0(I_p) = cmn\bigintss_{\Delta}(z_1...z_n)^{m-n}\prod_{1\leq i\leq j\leq m}(z_j-z_i)^2dz$$ where $\Delta=\operatorname{max}_i\{{z_i}+t-1\leq \sum z_i \leq t\}$. 
\end{theorem}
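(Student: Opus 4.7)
The plan is to exploit the $\GL_m \times \GL_n$-equivariance of $S = \CC[x_{ij}]$ and the ideal $I_p$: the length of the local cohomology module decomposes as a sum of Weyl dimensions of Schur modules, and the limit in Definition \ref{defn_dm} is then recognized as a Riemann integral. Concretely, I would first invoke the Cauchy decomposition $S = \bigoplus_\lambda \mathbb{S}_\lambda \CC^m \otimes \mathbb{S}_\lambda \CC^n$, where $\lambda$ ranges over partitions with at most $n$ parts, together with the $\GL$-equivariant description of the ordinary powers $I_p^D$ (De Concini--Eisenbud--Procesi) and of the $\frakm$-saturation $(I_p^D :_S \frakm^\infty) := \bigcup_t (I_p^D : \frakm^t)$ coming from the Raicu--Weyman--Witt machinery. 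This yields
$$H^0_\frakm(S/I_p^D) \;\cong\; (I_p^D :_S \frakm^\infty)/I_p^D \;=\; \bigoplus_{\lambda \in \Sigma_D} \mathbb{S}_\lambda \CC^m \otimes \mathbb{S}_\lambda \CC^n$$
for a combinatorially explicit set $\Sigma_D$ of partitions (those isotypical components lying in the saturation but not in $I_p^D$).

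Next, I would sum lengths using the Weyl dimension formula
$$\dim \mathbb{S}_\lambda \CC^m \;=\; \prod_{1\leq i < j \leq m} \frac{\lambda_i - \lambda_j + j - i}{j - i},$$
so that $\ell\bigl(H^0_\frakm(S/I_p^D)\bigr) = \sum_{\lambda \in \Sigma_D} \dim \mathbb{S}_\lambda \CC^m \cdot \dim \mathbb{S}_\lambda \CC^n$. Rescaling $\lambda_i = D z_i$ and extracting the leading $D^{k}$ behavior (with $k = mn = \Dim S$), the scaled sum converges to the integral in the statement. Two features emerge naturally: the squared Vandermonde $\prod_{i<j}(z_j - z_i)^2$ comes from the leading parts of the two Weyl products indexed by $\{1,\dots,n\}$, while the weight $(z_1 \cdots z_n)^{m-n}$ comes from the ``rectangular'' cross terms in the $m$-Weyl product with $i \leq n < j$, each contributing a factor $\sim D z_i$. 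The domain $\Delta$ is the rescaled limit of $\Sigma_D$ inside the Weyl chamber.

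The crux of the argument --- and the main obstacle --- is identifying $\Sigma_D$ explicitly and verifying that the limit exists (not merely the $\limsup$ of Definition \ref{defn_dm}). This requires a delicate comparison between the Schur decompositions of $I_p^D$, of the symbolic power $I_p^{(D)}$, and of the $\frakm$-saturation; exactly the sort of analysis the Raicu--Weyman--Witt framework enables and which the later sections of the paper will develop in the maximal-minors case. Once $\Sigma_D$ and its rescaled limit $\Delta$ are in hand, the passage from sum to integral is a standard Riemann-sum estimate, and Definition \ref{defn_dm} delivers the closed formula, with the scalar $cmn$ absorbing the $1/(j-i)$ denominators from the Weyl formula and the $k!$ of the definition.
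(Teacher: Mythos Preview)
The paper does not prove this statement at all: Theorem~\ref{0_multiplicities_result} is quoted from \cite{JeffriesMontanoVarbaro} as background motivation, with no proof supplied. There is therefore nothing in the paper to compare your proposal against for this particular theorem.

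That said, your outline is broadly the strategy that \cite{JeffriesMontanoVarbaro} actually carries out, and it is also the template the present paper follows in Section~4 for the maximal-minors case $p=n$ (where $H^0_\frakm$ vanishes and one must look at $H^{n^2-1}_\frakm$ instead). In both settings the mechanism is: decompose the relevant module into Schur pieces, sum Weyl dimensions, rescale, and recognize a Riemann sum. The honest gap you flag yourself---pinning down $\Sigma_D$ and proving the limit exists---is exactly the substantive work, and your proposal does not address it beyond naming the Raicu--Weyman--Witt machinery. In \cite{JeffriesMontanoVarbaro} the identification of $\Sigma_D$ is not done via saturation or the Ext decompositions of \cite{Raicu,RaicuWeyman}; rather, it comes from the De Concini--Eisenbud--Procesi description of $I_p^D$ together with a direct combinatorial description of which Schur components are $\frakm$-torsion in $S/I_p^D$. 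So while your high-level plan is correct, the specific route you suggest (via $(I_p^D:\frakm^\infty)$ and the Raicu--Weyman--Witt Ext filtration) is not how the cited proof proceeds, and you would still owe the reader the combinatorics of $\Sigma_D$ and the domain $\Delta$ before the Riemann-sum step becomes rigorous.
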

However, when $p=n$, $H^0_\frakm(S/I_n^D)$ is always $0$. To avoid this triviality we will instead study the multiplicites of $I_n$ of other cohomological indices.

It was proved in \cite{DaoMontano19} that when $S$ is a polynomial ring of $k$ variables and when $J$ is a homogeneous ideal of $S$, we have for all $\alpha \in \mathbb{Z}$, 
$$\limsup_{D\rightarrow\infty}\dfrac{k!\ell(H^j_\frakm(S/J^D)_{\geq \alpha D})}{D^k} < \infty.$$
As a corollary of the above result, combined with the result from \cite{Raicu}, which states that if $S$ is a polynomial ring of $m \times n$ variables and $I_p$ is a determinantal ideal of $p \times p$-minors of $S$, then $H^i_\frakm(S/I_p^D)_j = 0$ for $i\leq m+n-2$ and $j<0$, we get that $\epsilon_+^j(I_p) < \infty$ for $j\leq m+n-2$ (see \cite[Ch 5]{DaoMontano19}). Note that, as mentioned in \cite[Ch 7]{DaoMontano19}, even if $\epsilon^j(J)$ exists, it doesn't have to be rational (see the example in \cite[Ch 3]{CutkoskyHST}). Therefore it is natural to ask about the existence, and if exists, the rationality of the limit $\epsilon^j(J)$.

In this note we will exploit the representation theoretic techniques to specifically study the multiplicities of $I_n$. More precisely, we show the following.

\begin{theorem}(Theorem \ref{existence_limit}\label{main_thm})
	\begin{enumerate}
		\item If $j\neq n^2-1$, then $\ell(H^j_\frakm(R/I_n^D))$ is either $0$ or $\infty$. 
		\item If $j=n^2-1$, then $\ell(H^j_\frakm(R/I_n^D)) < \infty$ and is nonzero. Moreover we have $$\epsilon^j(I_n) = (mn)!\prod^{n-1}_{i=0}\dfrac{i!}{(m+i)!},$$ which is rational.
	\end{enumerate}
\end{theorem}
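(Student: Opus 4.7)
The overall strategy is to exploit the $\GL_m\times\GL_n$-action on $S$ combined with the explicit description of local cohomology of determinantal thickenings due to Raicu, Weyman, and Witt to produce an equivariant decomposition of $H^j_{\frakm}(S/I_n^D)$ into Schur modules. The length is then extracted as a sum of dimensions of these summands, evaluated via the Weyl dimension formula.

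To set up, I begin with the Cauchy decomposition $S = \bigoplus_{\ell(\lambda)\le n}\mathbb{S}_\lambda\CC^m\otimes\mathbb{S}_\lambda\CC^n$. Since $I_n$ is the ideal of maximal minors and $m\ge n$, the $D$-th power decomposes as the sum over partitions with $\lambda_n\ge D$ and $\ell(\lambda)\le n$, which gives an explicit $\GL_m\times\GL_n$-equivariant description of $S/I_n^D$. I then apply the Raicu-Weyman-Witt machinery, which associates to each partition $\lambda$ appearing in $S/I_n^D$ a cohomological degree $j(\lambda)$ and a ``shifted'' partition $\tilde\lambda$ so that $\mathbb{S}_\lambda\CC^m\otimes\mathbb{S}_\lambda\CC^n$ contributes the Schur module $\mathbb{S}_{\tilde\lambda}\CC^m\otimes\mathbb{S}_{\tilde\lambda}\CC^n$ to $H^{j(\lambda)}_{\frakm}(S/I_n^D)$.

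For the dichotomy in part (1), finite length of $H^j_{\frakm}(S/I_n^D)$ is equivalent to only finitely many Schur summands appearing, each of finite multiplicity. Tracing through the Raicu-Weyman-Witt shift, I expect that the contributing partitions are confined to a bounded region precisely when $j=n^2-1$; in every other cohomological degree, either the recipe produces no valid partition (so the cohomology vanishes) or it produces an infinite family with unbounded weights (so the length is infinite). The distinguished value $n^2-1$ should correspond to the cohomological shift needed to ``straighten'' the $n\times n$ block of $\lambda$ pinned by the constraint $\lambda_n < D$, and this is the step I anticipate requiring the most delicate bookkeeping.

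Finally, for $j = n^2 - 1$ I evaluate
\[
\ell\bigl(H^{n^2-1}_{\frakm}(S/I_n^D)\bigr) = \sum_{\lambda} \Dim\mathbb{S}_{\tilde\lambda}\CC^m \cdot \Dim\mathbb{S}_{\tilde\lambda}\CC^n
\]
over the finite contributing set. Applying the Weyl dimension formula rewrites each summand as a product of Vandermonde-like factors in $\lambda_i+n-i$, making the sum a polynomial in $D$ of degree $mn$; rescaling by $(mn)!/D^{mn}$ and letting $D\to\infty$ turns the sum into a Riemann integral over a polytope in $[0,1]^n$. The main obstacle is evaluating this integral to the stated closed form $\prod_{i=0}^{n-1} i!/(m+i)!$; I plan either to recognize it as a Selberg-type integral tied to the $A$-type root system, or to use Schur polynomial orthogonality to collapse the sum before passing to the limit. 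Rationality of $\epsilon^{n^2-1}(I_n)$ is then immediate from the closed form.
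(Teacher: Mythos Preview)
Your proposal is correct and follows essentially the same route as the paper: both use the $\GL$-equivariant Schur decomposition of the relevant modules (the paper via $\Ext_S^{mn-j}(S/I^D,S)$ and local duality, you directly on local cohomology), reduce the length computation to a sum of products of Weyl dimension formulas, pass to a polynomial/Riemann-integral asymptotic in $D$, and evaluate the resulting integral via Selberg's formula to obtain the closed form $\prod_{i=0}^{n-1} i!/(m+i)!$.

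Two small points where the paper's execution differs from your sketch. First, the paper does not work partition-by-partition on $S/I_n^D$; instead it invokes Raicu's filtration with subquotients $J_{\underline{z},l}$ and shows (Lemma~\ref{weights_max_minors}) that for maximal minors only $l=n-1$ and $\underline{z}=(c^n)$ occur, which pins down the contributing weight set $W(\underline{z},n-1,(n-1),s)$ explicitly---this is the bookkeeping step you flagged as delicate. Second, rather than summing over all contributing $\lambda$ at once, the paper uses the injectivity of $\Ext^j_S(I^{d-1},S)\to\Ext^j_S(I^d,S)$ (Lemma~\ref{injectivity}, Corollary~\ref{sum_of_length}) to telescope $\ell(\Ext^j_S(S/I^D,S))$ into $\sum_{d=n}^{D}\ell(\Ext^j_S(I^{d-1}/I^d,S))$, which isolates exactly the weights with $\lambda_n=n-d-m$ at each step and makes the Euler--Maclaurin/Faulhaber passage to an integral over the simplex $\{0\le x_{n-1}\le\cdots\le x_1\le 1\}\subset\mathbb{R}^{n-1}$ transparent. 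Your plan to recognize the integral as Selberg-type is exactly what the paper does.
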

Therefore we see that the only interesting cohomological index to our question is $n^2-1$, and we solve the problem of calculating the generalized $j$-multiplicites of determinantal ideals of maximal minors completely.

\medskip
\textbf{Notations.}
In this paper $\ell(M)$ will denote the length of a module $M$, $S$ will always denote the polynomial ring $\mathbb{C}[x_{ij}]$ of $m\times n$ variables and $I_p$ the determinantal ideal of $p \times p$ minors of $S$. All rings are assumed to be unital commutative.

\section{Preliminaries on Schur Functor}
We will recall the basic construction of the Schur functors, more information can be found in \cite{FultonHarris} and \cite{Weyman}. Let $V$ be an $n$-dimensional vector space over $\mathbb{C}$. We define the dominant weight of $V$ to be $\lambda = (\lambda_1,...,\lambda_n) \in \mathbb{Z}^n$ such that $\lambda_1\geq ... \geq \lambda_n$. Note that $(\lambda_1,\lambda_2,0,0,...,0) = (\lambda_1,\lambda_2)$. Furthermore we denote $(c,...,c)$ by $(c^n)$. We say $\lambda=(\lambda_1,\lambda_2,...) \geq \alpha = (\alpha_1,\alpha_2,...)$ if each $\lambda_i \geq \alpha_i$. Given a weight we can define an associated Young diagram with numbers filled in. For example if $\lambda=(3,2,1)= (3,2,1,0,0,0)\in \mathbb{Z}^6$, then we can draw the Young diagram 
\[
\begin{ytableau}
	1 & 2 & 3\\
	4 & 5\\
	6\\	
\end{ytableau}
\]
Let $\mathfrak{S}_n$ be the permutation group of $n$ elements. Let $P_\lambda = \{g\in\mathfrak{S}_n:g \text{ preserves each row}\}$ and $Q_\lambda=\{g\in\mathfrak{S}_n:g \text{ preserves each column}\}$. Then we define $a_\lambda = \sum_{g\in P_\lambda}e_g$, $b_\lambda = \sum_{g\in Q_\lambda}\operatorname{sgn}(g)e_g$, and moreover $c_\lambda  = a_\lambda \cdot b_\lambda$. 

Recall that the Schur functor $S_\lambda(-)$ is defined to $$S_\lambda(V) = \operatorname{Im}(c_\lambda\big|_{V^{\otimes d}}).$$ 

Let $V$ be an $n$-dimensional $\mathbb{C}$-vector space. We have a formula for the dimension of $S_\lambda V$ as $\mathbb{C}$-vector space.
\begin{prop}(See \cite[Ch2]{FultonHarris})\label{dim_schur}
	Suppose $\lambda = (\lambda_1,...,\lambda_n) \in \mathbb{Z}^n_\text{dom}$. Then we have $$\Dim(S_\lambda V) = \prod_{1\leq i < j \leq n}\dfrac{\lambda_i-\lambda_j+j-i}{j-i}.$$
\end{prop}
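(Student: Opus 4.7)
The plan is to derive the formula from the Weyl character formula for $\GL(V)$, equivalently from the bialternant (Jacobi--Trudi style) definition of the Schur polynomial. First I would recall that, as the image of the Young symmetrizer $c_\lambda$ acting on $V^{\otimes d}$, the space $S_\lambda V$ carries a natural action of $\GL(V)$ and is the irreducible polynomial representation of highest weight $\lambda$. Its formal character is therefore the Schur polynomial
$$s_\lambda(x_1,\dots,x_n) \;=\; \frac{\det\bigl(x_i^{\lambda_j+n-j}\bigr)_{1\le i,j\le n}}{\det\bigl(x_i^{n-j}\bigr)_{1\le i,j\le n}},$$
the denominator being the Vandermonde $\prod_{1\le i<j\le n}(x_i-x_j)$, and $\Dim(S_\lambda V)$ is obtained as the specialization $s_\lambda(1,1,\dots,1)$.

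Since a direct substitution produces an indeterminate $0/0$ form, the next step is the standard one-parameter deformation: put $x_i = q^{n-i}$ and let $q\to 1$. Writing $\mu_j := \lambda_j + n - j$, both determinants become honest Vandermondes in the exponential variables --- the numerator equals, up to sign, the Vandermonde in $q^{\mu_1},\dots,q^{\mu_n}$, while the denominator is the analogous Vandermonde in $q^{\,n-1},\dots,q^{\,0}$. Factoring a power of $q$ out of each difference and cancelling, the quotient simplifies to
$$q^{N(\lambda)} \prod_{1\le i<j\le n} \frac{q^{\lambda_i-\lambda_j+j-i}-1}{q^{\,j-i}-1}$$
for some integer $N(\lambda)$. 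Passing to the limit $q\to 1$ by means of $\lim_{q\to 1}(q^a-1)/(q-1)=a$ in each factor kills the prefactor and yields the product formula claimed in the statement.

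The main technical hurdle is the identification of the two deformed determinants as ordinary Vandermondes in the respective variables $q^{\mu_j}$ and $q^{n-j}$; this step needs the strict decrease $\mu_1>\cdots>\mu_n$ (i.e.\ the dominance of $\lambda$) to guarantee that the denominator remains nonzero for $q$ near but distinct from $1$. Once this is in place, everything else is routine bookkeeping of signs and powers of $q$. As a sanity check the formula correctly gives $\Dim(S_{(0,\dots,0)}V)=1$ and $\Dim(S_{(1,0,\dots,0)}V)=n$, and more generally $\Dim(S_{(d,0,\dots,0)}V)=\binom{n+d-1}{d}$, matching the known dimension of $\operatorname{Sym}^d V$.
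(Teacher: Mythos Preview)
The paper does not supply its own proof of this proposition; it is quoted as a known result with a reference to \cite{FultonHarris}. Your argument via the bialternant formula and the $q$-deformation $x_i=q^{n-i}$ is correct and is essentially the standard derivation of the Weyl dimension formula for $\GL_n$ found in that reference, so there is nothing to compare.

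One small remark on presentation: when you say ``the numerator equals, up to sign, the Vandermonde in $q^{\mu_1},\dots,q^{\mu_n}$'', it is worth making explicit that after substituting $x_i=q^{n-i}$ the $(i,j)$ entry is $(q^{\mu_j})^{\,n-i}$, so the matrix is literally of Vandermonde type in the column variables $z_j=q^{\mu_j}$; this is the only point where a reader might pause. The dominance hypothesis is used exactly where you say, to keep the $\mu_j$ pairwise distinct and hence the $z_j$ distinct for $q$ close to $1$.
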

From the formula of $\Dim(S_\lambda V)$ it is easy to see the following.
\begin{corollary}\label{same_dim}
	For any $c\in \mathbb{N}$ we have
	$$\Dim(S_\lambda V) = \Dim(S_{\lambda+(c^n)}V).$$
\end{corollary}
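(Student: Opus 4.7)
The plan is to apply Proposition \ref{dim_schur} directly to both sides and check that the two products coincide term by term. Writing $\mu := \lambda + (c^n)$, so that $\mu_i = \lambda_i + c$ for every $1 \le i \le n$, the crucial observation is that for any pair $1 \le i < j \le n$ the difference
$$\mu_i - \mu_j = (\lambda_i + c) - (\lambda_j + c) = \lambda_i - \lambda_j$$
is independent of $c$. Consequently, each factor appearing in the product formula for $\Dim(S_\mu V)$ satisfies
$$\frac{\mu_i - \mu_j + j - i}{j - i} \;=\; \frac{\lambda_i - \lambda_j + j - i}{j - i},$$
which is exactly the $(i,j)$-factor in the formula for $\Dim(S_\lambda V)$. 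Taking the product over $1 \le i < j \le n$ gives $\Dim(S_{\lambda+(c^n)} V) = \Dim(S_\lambda V)$, as required.

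There is essentially no obstacle here: the dimension formula of Proposition \ref{dim_schur} depends on $\lambda$ only through the pairwise differences $\lambda_i - \lambda_j$, and these are invariant under a uniform shift of all coordinates. Conceptually this matches the representation-theoretic fact that $S_{\lambda + (c^n)} V \cong S_\lambda V \otimes (\det V)^{\otimes c}$ as $\GL(V)$-representations, and $(\det V)^{\otimes c}$ is one-dimensional; however, for the purpose of the present corollary the direct numerical verification using the formula in Proposition \ref{dim_schur} is the cleanest route and is the one I would write up.
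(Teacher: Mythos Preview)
Your proof is correct and is exactly the approach the paper intends: the corollary is stated immediately after Proposition~\ref{dim_schur} with the remark that it follows directly from the formula, and your term-by-term verification that the pairwise differences $\lambda_i-\lambda_j$ are unchanged under a uniform shift is precisely that observation.
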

\section{$\operatorname{GL}$-equivariant vector space decompositions of $\Ext^j_S(S/I_p^d)$}
In this section we recall the $\operatorname{GL}$-equivariant $\mathbb{C}$-vector spaces decompositions of $\Ext^j_S(S/I_p^D)$ given in \cite{Raicu}. This will be the key ingredient in the disuccsion of multiplicities in section 4. 
First recall that we have the $\operatorname{GL}$-equivariant decomposition (Cauchy formula) of $S$:$$S=\bigoplus_{\lambda\in \mathbb{Z}^{\text{dom}}_{\geq 0}} S_\lambda\mathbb{C}^m \otimes S_\lambda\mathbb{C}^n.$$

It was shown in \cite{DeConciniEisenbudProcesi} that a $\operatorname{GL}$-equivariant ideal $I$ of $S$ can be written as $$I_\lambda = \bigoplus_{\mu\geq \lambda}S_\mu\mathbb{C}^m \otimes S_\mu\mathbb{C}^n,$$ and in particular the ideal of $p\times p$ minors is equal to $I_{(1^p)}$.  

\begin{theorem}\label{schur-decompos}(See \cite[Theorem 3.3]{RaicuWeyman}, \cite[Theorem 2.5, Theorem 3.2, Lemma 5.3]{Raicu})
	There exists a $GL$-equivariant filtration of $S/I_p^d$ with factors $J_{\underline{z},l}$ which are quotients of $I_{\underline{z}}$. Therefore we have the following vector spaces decomposition of $\Ext^j_S(S/I^d_p,S)$:
	\begin{align}\label{decompose_first}
		\Ext_S^j(S/I_p^d,S) = \bigoplus_{(\underline{z},l) \in \mathcal{Z}^d_p} \Ext_S^j(J_{\underline{z},l},S)
	\end{align}
	and recall that $\mathcal{Z}^d_p$ is given by
\begin{align*}
    \mathcal{Z}^d_p=\big\{(\underline{z},l): 0\leq l \leq p-1, \underline{z}\in \mathcal{P}_n, z_1=...=z_{l+1}\leq d-1&, \\|\underline{z}| + (d-z_1)\cdot l +1 \leq p\cdot d \leq |\underline{z}|+(d-z_1)\cdot(l+1)\big\}
\end{align*}
and we have
\begin{align}
\operatorname{Ext}_S^j(J_{(\underline{z},l)},S) = \bigoplus_{\substack{0\leq s \leq t_1 \leq ... \leq t_{n-l}\leq l\\ mn -l^2 -s(m-n)-2(\sum^{n-l}_{i=1}t_i)=j \\ \lambda \in W(\underline{z},l;\underline{t},s)}} S_{\lambda(s)}\mathbb{C}^m \otimes S_\lambda\mathbb{C}^n
\end{align}
where $\mathcal{P}_n$ is the collection of partitions with at most $n$ nonzero parts, which means $z_1\geq z_2 \geq ... \geq z_n \geq 0$. Moreover the set $W(\underline{z},l,\underline{t},s)$ consists of dominant weights satisfies the following conditions:
\begin{align}\label{res_weight_gen}
\begin{cases}
\lambda_n \geq l -z_l -m, \\
\lambda_{t_i+i} = t_i-z_{n+1-i}-m & i=1,...,n-l,\\
\lambda_s \geq s-n \text{ and } \lambda_{s+1} \leq s-m.
\end{cases}
\end{align}
and the $\lambda(s)$ is given by $$\lambda(s)=(\lambda_1,...,\lambda_s,(s-n)^{m-n},\lambda_{s+1}+(m-n),...,\lambda_n+(m-n)) \in \mathbb{Z}^m_{\text{dom}}.$$
	In fact in our case we have $\lambda_n=l-z_l-m$. This also implies that $t_{n-l}=l$.
\end{theorem}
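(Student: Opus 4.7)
Since the bulk of the statement is cited verbatim from Raicu~\cite{Raicu} and Raicu--Weyman~\cite{RaicuWeyman}, the plan is (i) to assemble the cited ingredients in the correct order, and (ii) to verify the final specialization $\lambda_n=l-z_l-m$ (and hence $t_{n-l}=l$) that holds in our maximal-minors case $p=n$.

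For (i), the $\operatorname{GL}$-equivariant filtration of $S/I_p^d$ with successive quotients $J_{\underline{z},l}$ is supplied by \cite[Theorem~2.5]{Raicu}. Because the filtration has finite length and each graded piece is a $\operatorname{GL}$-semisimple $\mathbb{C}$-vector space, applying $\operatorname{Ext}^j_S(-,S)$ produces long exact sequences that split as sequences of $\operatorname{GL}$-representations. Iterating over the filtration yields decomposition~\eqref{decompose_first}. The Schur-module description of each summand $\operatorname{Ext}^j_S(J_{\underline{z},l},S)$ then comes from \cite[Theorem~3.3]{RaicuWeyman} together with \cite[Theorem~3.2, Lemma~5.3]{Raicu}: the indexing tuple $(s,t_1,\ldots,t_{n-l})$ parametrizes the Bott-exchange patterns on a $\operatorname{GL}$-equivariant free resolution of $J_{\underline{z},l}$, the identity $j=mn-l^2-s(m-n)-2\sum t_i$ records the parity cost of those exchanges, $\lambda(s)$ is the Bott shift on the $\mathbb{C}^m$-factor, and the conditions in \eqref{res_weight_gen} are exactly the dominance and non-vanishing constraints needed for the Bott output to survive.

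For (ii), the defining inequalities of $\mathcal{Z}^d_p$ must be pushed through to the case $p=n$. The key is to show that for $p=n$ the simultaneous constraints $|\underline{z}|+(d-z_1)l+1\le nd\le|\underline{z}|+(d-z_1)(l+1)$ and $\underline{z}\in\mathcal{P}_n$ leave no slack in the lower-bound inequality $\lambda_n\ge l-z_l-m$ of \eqref{res_weight_gen}: any strict inequality would force a tuple $\underline{z}$ lying either outside $\mathcal{P}_n$ or outside the prescribed range of $|\underline{z}|$. Once $\lambda_n=l-z_l-m$ is pinned down, combining it with the chain $\lambda_{t_i+i}=t_i-z_{n+1-i}-m$ and the ordering $t_{n-l}\le l$ forces $t_{n-l}=l$. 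I expect verifying this collapse to equality to be the main technical obstacle; the rest of the theorem is a direct citation.
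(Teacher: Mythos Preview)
Your plan for (i) is fine and is exactly what the paper does: the theorem is quoted wholesale from the cited references, with no argument supplied beyond the pointers.

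Your plan for (ii), however, rests on the wrong mechanism. The equality $\lambda_n=l-z_l-m$ (and the consequence $t_{n-l}=l$) is \emph{not} a feature of the maximal-minors specialization $p=n$; it holds for every $(\underline{z},l)\in\mathcal{Z}^d_p$ and follows purely from the weight conditions~\eqref{res_weight_gen} together with the dominance of $\lambda$. Concretely, take $i=n-l$ in the second line of~\eqref{res_weight_gen} to get $\lambda_{t_{n-l}+(n-l)} = t_{n-l}-z_{l+1}-m$. Since $t_{n-l}\le l$ the index $t_{n-l}+(n-l)$ is at most $n$, so dominance gives $\lambda_n\le t_{n-l}-z_{l+1}-m$. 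On the other hand the first line of~\eqref{res_weight_gen} says $\lambda_n\ge l-z_l-m$, and the $\mathcal{Z}^d_p$ requirement $z_1=\cdots=z_{l+1}$ gives $z_l=z_{l+1}$. Chaining these, $l-z_{l+1}-m\le\lambda_n\le t_{n-l}-z_{l+1}-m$, whence $l\le t_{n-l}\le l$; thus $t_{n-l}=l$ and the index $t_{n-l}+(n-l)$ equals $n$, forcing $\lambda_n=l-z_l-m$.

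The size inequalities $|\underline{z}|+(d-z_1)l+1\le pd\le|\underline{z}|+(d-z_1)(l+1)$ that you propose to exploit constrain $\underline{z}$, not $\lambda$, and cannot by themselves pin down $\lambda_n$. So your ``no slack'' argument would not go through as stated; replace it with the two-line dominance argument above and the proof is complete.
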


In the rest of the paper we will assume $p=n$, i.e. we only focus on the maximal minors case.

\begin{lemma}\label{weights_max_minors}
	In Theorem \ref{schur-decompos} we have $l=n-1$. Therefore the pair $(\underline{z},l)$ in Theorem \ref{schur-decompos} is of the form $((c)^n,n-1)$ for $c\leq d-1$. In particular we have $((d-1)^n,n-1)$ in $\mathcal{Z}^d_n$.
\end{lemma}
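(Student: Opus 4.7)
The plan is to extract all the information directly from the defining inequalities of $\mathcal{Z}^d_n$. Setting $p=n$, the upper bound in the definition reads $nd \le |\underline{z}|+(d-z_1)(l+1)$. Since the coordinates satisfy $z_1=z_2=\cdots=z_{l+1}$, we can split the partition sum as $|\underline{z}| = (l+1)z_1 + (z_{l+2}+\cdots+z_n)$, and substituting into the upper bound gives
\[
nd \;\le\; (l+1)z_1 + (z_{l+2}+\cdots+z_n) + (d-z_1)(l+1) \;=\; (l+1)d + (z_{l+2}+\cdots+z_n),
\]
which rearranges to $(n-l-1)d \le z_{l+2}+\cdots+z_n$.

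The next step is to bound the right-hand side from above. Because $\underline{z}\in\mathcal{P}_n$ is weakly decreasing and $z_{l+1}\le d-1$, we have $z_i \le d-1$ for every $i\ge l+2$, so $z_{l+2}+\cdots+z_n \le (n-l-1)(d-1)$. If $l<n-1$ then $n-l-1\ge 1$, and combining the two bounds gives $(n-l-1)d\le(n-l-1)(d-1)$, a contradiction. Therefore $l=n-1$, as claimed.

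With $l=n-1$, the condition $z_1=\cdots=z_{l+1}$ becomes $z_1=\cdots=z_n$, so $\underline{z}=(c^n)$ for some integer $c$ with $0\le c\le d-1$. It remains to verify that the pair $((d-1)^n,n-1)$ actually lies in $\mathcal{Z}^d_n$, i.e.\ to check the two-sided inequality
\[
|\underline{z}| + (d-z_1)\cdot l + 1 \;\le\; nd \;\le\; |\underline{z}| + (d-z_1)\cdot(l+1).
\]
With $z_1=c=d-1$ and $l=n-1$ we have $|\underline{z}|=n(d-1)$ and $d-z_1=1$, so the middle becomes $nd$, the right side equals $n(d-1)+n=nd$, and the left side equals $n(d-1)+(n-1)+1=nd$. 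Both inequalities hold (with equality), so $((d-1)^n,n-1)\in\mathcal{Z}^d_n$.

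The only even mildly tricky step is the first bookkeeping of $|\underline{z}|$ and the resulting comparison $(n-l-1)d \le (n-l-1)(d-1)$; once that is written down the rest is a direct substitution. I do not expect any genuine obstacle.
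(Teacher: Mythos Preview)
Your proof is correct and follows essentially the same approach as the paper: both argue by contradiction from the upper inequality $nd \le |\underline{z}|+(d-z_1)(l+1)$, using $z_1=\cdots=z_{l+1}\le d-1$ to force $l=n-1$, then read off $\underline{z}=(c^n)$ and verify $((d-1)^n,n-1)\in\mathcal{Z}^d_n$ directly. The only cosmetic difference is that the paper bounds $|\underline{z}|\le n z_1$ and $l+1\le n-1$ separately, whereas you first substitute $|\underline{z}|=(l+1)z_1+\sum_{i>l+1}z_i$ to obtain the cleaner inequality $(n-l-1)d\le \sum_{i>l+1}z_i\le (n-l-1)(d-1)$.
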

\begin{proof}: Note the restriction $l\leq p-1$ gives $l\leq n-1$. It is easy to check that $((d-1)^n,n-1)$ is in $\mathcal{Z}^d_n$. On the other hand, assume that there exists $(\underline{z},l)$ in $\mathcal{Z}^d_n$ such that $l\leq n-2$. From Theorem \ref{schur-decompos} we have the restriction $$|\underline{z}|+(d-z_1)\cdot(l+1) \geq nd$$ when $p=n$. However by our assumption we have
\begin{align*}
    |\underline{z}|+(d-z_1)(l+1) &\leq |\underline{z}|+(d-z_1)(n-1)  \\&=  |\underline{z}|+d(n-1)-z_1(n-1) \\&\leq  nz_1+d(n-1)-z_1(n-1) \\&=   z_1+d(n-1) \\&\leq  d-1 +d(n-1) = nd - 1 < nd.
\end{align*}
Contradicting to our restriction. Therefore we must have $l=n-1$. Moreover, by the definition of $(\underline{z},l)$ we have $z_1=...=z_{l+1}$, therefore in our case we have $z_1=...=z_n$. So the $(\underline{z},l)$ is of the form $((c)^n,n-1)$ for $c\leq d-1$.
\end{proof} 
In the rest of the paper we will denote $I:=I_n$.
Using this information we can also gives a criterion of the vanishing of the $\Ext$ modules. Recall that the highest non-vanishing cohomological degree of $S/I^d_n$ is $n(m-n)+1$ (see \cite{Huneke}). This can be seen from the following lemma as well.
\begin{lemma}\label{vanishing_degree}
	In our setting $\Ext^j_S(S/I^d,S)\neq 0$ if and only if $m-n$ divides $1-j$ and $j\geq 2$. Moreover, $\Ext^{n(m-n)+1}_S(S/I^d,S)\neq 0$ if and only if $d\geq n$.
\end{lemma}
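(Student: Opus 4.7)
The plan is to read off both assertions directly from the weight data in Theorem~\ref{schur-decompos} after specializing to $p=n$. By Lemma~\ref{weights_max_minors}, every index $(\underline{z},l)\in\mathcal{Z}^d_n$ satisfies $l=n-1$ and $\underline{z}=(c)^n$ for some $0\leq c\leq d-1$. Since $n-l=1$ and the theorem forces $t_{n-l}=l$, the only $t$-variable $t_1$ equals $n-1$. Plugging $l=n-1$ and $t_1=n-1$ into the degree identity of Theorem~\ref{schur-decompos} gives
\begin{equation*}
	j \;=\; mn-(n-1)^2 - s(m-n) - 2(n-1) \;=\; (n-s)(m-n)+1,
\end{equation*}
valid for $0\leq s\leq n-1$.

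The necessity part of the first assertion is immediate from this identity: a non-vanishing cohomological degree must have the form $(n-s)(m-n)+1$, so $m-n$ divides $1-j$, and since $n-s\geq 1$ and $m>n$ we get $j\geq (m-n)+1\geq 2$. For the sufficiency direction I would exhibit explicit weights. Given $j=(n-s)(m-n)+1$ with $0\leq s\leq n-1$, take $c=n-1-s$ and
\begin{equation*}
	\lambda \;=\; (\underbrace{s-n,\ldots,s-n}_{s},\,\underbrace{s-m,\ldots,s-m}_{n-s}).
\end{equation*}
A direct check shows $\lambda$ is dominant (using $m\geq n$), satisfies $\lambda_n=(n-1)-c-m=s-m$, and meets $\lambda_s\geq s-n$ and $\lambda_{s+1}\leq s-m$ with equality; hence $\lambda\in W((c)^n,n-1;(n-1),s)$ and the summand $S_{\lambda(s)}\mathbb{C}^m\otimes S_\lambda\mathbb{C}^n$ is a nonzero piece of $\Ext^j_S(S/I^d,S)$, valid whenever $c\leq d-1$, i.e.\ $d\geq n-s$.

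For the second assertion, specialize to $j=n(m-n)+1$, i.e.\ $s=0$. Then any $\lambda\in W((c)^n,n-1;(n-1),0)$ must satisfy $\lambda_{1}\leq -m$ together with $\lambda_n=(n-1)-c-m$, so dominance forces $(n-1)-c-m\leq -m$, i.e.\ $c\geq n-1$. Combined with $c\leq d-1$ this gives $d\geq n$. Conversely, when $d\geq n$ one may take $c=n-1$ and $\lambda=(-m)^n$, producing a nonzero summand as before.

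The main obstacle is verifying the weight-set conditions (\ref{res_weight_gen}) in the degenerate cases $s=0$ and $s=n-1$, and ruling out a cleverer choice of $\lambda$ that might circumvent the bound $c\geq n-1-s$. The crucial rigidity is the equality $\lambda_n=(n-1)-c-m$ (the refined relation stated at the end of Theorem~\ref{schur-decompos}), which pins the tail of $\lambda$ and, combined with the ceiling $\lambda_{s+1}\leq s-m$, makes the stated bound on $c$ sharp; this is what powers the precise $d\geq n$ threshold in the second assertion.
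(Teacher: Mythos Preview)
Your proof is correct and follows essentially the same route as the paper's: both specialize Theorem~\ref{schur-decompos} via Lemma~\ref{weights_max_minors} to reach $l=n-1$, $t_1=n-1$, and the identity $s(m-n)=n(m-n)+1-j$, then read off the divisibility and inequality constraints and, for the second part, the dominance obstruction $\lambda_n\leq\lambda_1\leq -m$. Your version is in fact more careful than the paper's, since you exhibit an explicit witness weight for the sufficiency direction of the first assertion and record the threshold $d\geq n-s$, which the paper's proof leaves implicit.
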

\begin{proof}
	By Lemma \ref{weights_max_minors}, the weights $\lambda\in W:=W(\underline{z},n-1,(n-1),s)$ have the restrictions
	\begin{align}\label{res_weight_max}
		\begin{cases}
		\lambda_n=n-1-z_{n-1}-m,\\
		\lambda_s\geq s-n \text{ and } \lambda_{s+1}\leq s-m.
		\end{cases}
	\end{align}
	We also have 
	\begin{align}\label{important_eq}
		mn-(n-1)^2-s(m-n)-2(n-1)=j \implies s(m-n)=n(m-n)+1-j.
	\end{align}
	By Theorem \ref{schur-decompos}, $\Ext^j_S(S/I^d_n,S)\neq 0$ if and only if the set $W$ is not empty, then by (\ref{res_weight_max}) and (\ref{important_eq}) this means $m-n$ divides $n(m-n)+1-j \implies m-n$ divides $1-j$ and $s=(n(m-n)+1-j)/(m-n) \leq l = n-1 \implies j\geq 2$. This proves the first statement.

	To see the second statement, note that when $j=n(m-n)+1 = mn-(n-1)^2-2(n-1)$ we have $s=0$. In this case we have the restriction
	\begin{align*}
		\begin{cases}
			\lambda_n = n-1-z_{n-1}-m,\\
			\lambda_1 \leq -m.
		\end{cases}
	\end{align*}
	If $d<n$, then $\lambda_n \geq n-d-m > -m \geq \lambda_1$, a contradiction, so that means the set $W$ is empty. On the other hand if $d\geq n$ then $W$ is not empty. So $\Ext^{n(m-n)+1}_S(S/I^d,S) \neq 0$ if and only if $d\geq n$. 
\end{proof}

In our proof of the main theorem, we will need an important property of the $\Ext$-modules, which only holds for maximal minors.

\begin{prop}\label{isom_ext_modules}(See \cite[Corollary 4.4]{RaicuWeymanWitt})
	We have $\Hom_S(I^d,S)=S$, $\Ext^1(S/I^d,S)=0$ and $\Ext^{j+1}_S(S/I^d,S) = \Ext^j_S(I^d,S)$ for $j>0$.
\end{prop}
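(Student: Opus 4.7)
The plan is to apply the contravariant functor $\Hom_S(-, S)$ to the short exact sequence
$$0 \to I^d \to S \to S/I^d \to 0$$
and to read off all three assertions from the resulting long exact sequence of $\Ext$-modules. Two ingredients are needed: first, that $\Ext^i_S(S, S) = 0$ for every $i \geq 1$, which is immediate because $S$ is free over itself; second, the vanishing of $\Ext^i_S(S/I^d, S)$ for $i = 0, 1$, which is the only substantive point.

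To establish the second vanishing I would invoke the standard fact that for a proper ideal $J$ in a Cohen-Macaulay ring $S$ one has $\Ext^i_S(S/J, S) = 0$ for every $i < \operatorname{grade}(J, S)$, together with the identity $\operatorname{grade}(J, S) = \operatorname{ht}(J)$ in the Cohen-Macaulay setting. Since $I = I_n$ is the ideal of maximal minors of an $m \times n$ generic matrix with $m > n$, its height is the classical value $m - n + 1 \geq 2$ (seen for instance from the Eagon--Northcott resolution of $S/I_n$), and since $I^d$ shares its radical with $I$, one has $\operatorname{ht}(I^d) = m - n + 1 \geq 2$ as well. Hence both $\Hom_S(S/I^d, S)$ and $\Ext^1_S(S/I^d, S)$ vanish.

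Feeding these vanishings into the long exact sequence, the piece in degrees $0$ and $1$ collapses to $0 \to S \to \Hom_S(I^d, S) \to 0$, which at once gives $\Hom_S(I^d, S) = S$ alongside the asserted $\Ext^1_S(S/I^d, S) = 0$. For each $j \geq 1$, the sandwiched four-term piece $\Ext^j_S(S, S) \to \Ext^j_S(I^d, S) \to \Ext^{j+1}_S(S/I^d, S) \to \Ext^{j+1}_S(S, S)$ becomes $0 \to \Ext^j_S(I^d, S) \to \Ext^{j+1}_S(S/I^d, S) \to 0$, yielding the remaining isomorphisms. The only delicate input is the height of the determinantal ideal $I_n$, which is classical; everything else is bookkeeping on the long exact sequence, so I do not expect a serious obstacle.
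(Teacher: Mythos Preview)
Your argument is correct. The paper does not supply its own proof of this proposition; it simply cites \cite[Corollary~4.4]{RaicuWeymanWitt}. Your route is the standard and entirely elementary one: run the long exact sequence in $\Ext_S(-,S)$ on $0\to I^d\to S\to S/I^d\to 0$, use that $S$ is free to kill $\Ext^i_S(S,S)$ for $i\ge 1$, and invoke $\operatorname{grade}(I^d,S)=\operatorname{ht}(I_n)=m-n+1\ge 2$ (as $m>n$) to get the vanishing of $\Ext^i_S(S/I^d,S)$ for $i=0,1$. All three assertions then fall out exactly as you describe. The cited result in \cite{RaicuWeymanWitt} is stated in a more general representation-theoretic framework, but for the specific claims here your direct argument is both sufficient and cleaner; there is no gap.
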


\begin{lemma}(See \cite[Theorem 4.5]{RaicuWeymanWitt})\label{injectivity}
	Given the short exact sequence $0\rightarrow I^{d}\ \rightarrow I^{d-1} \rightarrow I^{d-1}/I^d \rightarrow 0$, the induced map $\Ext^j_S(I^{d-1},S)\rightarrow \Ext^j_S(I^{d},S)$ is injective for any $j$ such that $\Ext^j_S(I^d,S)\neq 0$.
\end{lemma}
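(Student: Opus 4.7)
The plan is to apply $\Hom_S(-,S)$ to the given short exact sequence and then use Theorem~\ref{schur-decompos} to force the relevant connecting map to vanish by a weight-disjointness argument. First I would handle $j=0$ separately: the map $\Hom_S(I^{d-1},S)\to\Hom_S(I^d,S)$ is an isomorphism $S\to S$ by Proposition~\ref{isom_ext_modules}, so in particular injective. For $j\ge1$ the same proposition identifies the map in question with the map
\begin{equation*}
\Ext^{j+1}_S(S/I^{d-1},S)\to\Ext^{j+1}_S(S/I^d,S)
\end{equation*}
induced by the surjection $S/I^d\twoheadrightarrow S/I^{d-1}$. Applying $\Hom_S(-,S)$ to the short exact sequence $0\to I^{d-1}/I^d\to S/I^d\to S/I^{d-1}\to 0$ yields a long exact sequence whose relevant portion is
\begin{equation*}
\cdots\to\Ext^j_S(I^{d-1}/I^d,S)\xrightarrow{\delta^j}\Ext^{j+1}_S(S/I^{d-1},S)\to\Ext^{j+1}_S(S/I^d,S)\to\cdots,
\end{equation*}
so the desired injectivity reduces to proving $\delta^j=0$.

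Next I would decompose everything via Theorem~\ref{schur-decompos} and Lemma~\ref{weights_max_minors}. Writing $E^k_c:=\Ext^k_S(J_{(c^n),n-1},S)$, the theorem immediately gives $\Ext^{j+1}_S(S/I^{d-1},S)=\bigoplus_{c=0}^{d-2}E^{j+1}_c$. On the other hand, since $I=I_n$ is a prime ideal whose ordinary and symbolic powers coincide, the Cauchy decomposition yields $I^{d-1}/I^d=\bigoplus_{\mu_n=d-1}S_\mu\mathbb{C}^m\otimes S_\mu\mathbb{C}^n$, which matches the Schur content of the top factor $J_{((d-1)^n),n-1}$ of the Raicu filtration of $S/I^d$; this identification gives $\Ext^j_S(I^{d-1}/I^d,S)=E^j_{d-1}$.

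Finally I would finish by the weight argument. By \eqref{res_weight_max} specialized to $l=n-1$ and $z_{n-1}=c$, every Schur component $S_{\lambda(s)}\mathbb{C}^m\otimes S_\lambda\mathbb{C}^n$ appearing in $E^k_c$ has $\lambda_n=n-1-c-m$, independently of $k$. Hence the source $E^j_{d-1}$ of $\delta^j$ is supported entirely on the single value $\lambda_n=n-d-m$, while its target $\bigoplus_{c=0}^{d-2}E^{j+1}_c$ is supported on $\lambda_n\in\{n-1-m,n-2-m,\dots,n+1-d-m\}$. These two sets of $\lambda_n$-values are disjoint, so by $\GL$-equivariance (Schur's lemma) $\delta^j$ must vanish, which gives the required injectivity. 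The main obstacle is the module-level identification $I^{d-1}/I^d\cong J_{((d-1)^n),n-1}$: while the Cauchy decomposition pins down the Schur content, showing that it literally coincides with the top factor of the filtration (as $S$-modules, not merely as $\GL$-modules) requires unpacking the construction of the filtration in \cite{Raicu}; once that is granted the argument collapses to the clean weight-disjointness observation above.
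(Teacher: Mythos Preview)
The paper does not prove this lemma at all; it simply records it with the attribution ``(See \cite[Theorem 4.5]{RaicuWeymanWitt})'' and moves on. So there is no in-paper argument to compare against, and your proposal is effectively an independent proof using the machinery of Theorem~\ref{schur-decompos}.

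Your argument is sound. The reduction via Proposition~\ref{isom_ext_modules} to the map $\Ext^{j+1}_S(S/I^{d-1},S)\to\Ext^{j+1}_S(S/I^d,S)$ is correct (the connecting isomorphisms $\Ext^j_S(I^d,S)\xrightarrow{\sim}\Ext^{j+1}_S(S/I^d,S)$ are natural in $d$), and the weight-disjointness step is exactly the right mechanism: once you know that every irreducible summand of $\Ext^j_S(I^{d-1}/I^d,S)$ has $\lambda_n=n-d-m$ while every summand of $\Ext^{j+1}_S(S/I^{d-1},S)$ has $\lambda_n\ge n-(d-1)-m$, Schur's lemma kills $\delta^j$. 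The one point you flag --- the $S$-module identification $I^{d-1}/I^d\cong J_{((d-1)^n),\,n-1}$ --- is genuinely needed (otherwise you only know the $\GL$-content of $I^{d-1}/I^d$, not of its $\Ext$ groups), but it is not deep: by Lemma~\ref{weights_max_minors} the filtration of $S/I^d$ in Theorem~\ref{schur-decompos} has exactly the $d$ factors indexed by $c=0,\dots,d-1$, and unwinding the construction in \cite{Raicu,RaicuWeyman} one sees that for maximal minors this filtration is the $I$-adic one, $0\subset I^{d-1}/I^d\subset\cdots\subset I/I^d\subset S/I^d$, with $J_{(c^n),\,n-1}=I^c/I^{c+1}$. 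The original proof in \cite{RaicuWeymanWitt} predates Theorem~\ref{schur-decompos} and proceeds via a direct computation of the $\Ext$ modules using geometric/resolution techniques; your route is more internal to the present paper since it leverages Theorem~\ref{schur-decompos} directly.
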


In order to prove our main theorem, we need to investigate the length of the $\Ext$-modules. We will need the following fact.

\begin{lemma}\label{length=dim}
	Given a graded $S$-module $M$ we have $\ell(M) = \Dim_\mathbb{C}(M)$.
\end{lemma}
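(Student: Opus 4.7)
The plan is to show that both $\ell(M)$ and $\dim_{\mathbb{C}}(M)$ are either simultaneously infinite or simultaneously finite and equal. The key algebraic input is that every simple graded $S$-module is isomorphic to a degree shift of the residue field $S/\mathfrak{m} \cong \mathbb{C}$, which is one-dimensional over $\mathbb{C}$. This is because $S = \mathbb{C}[x_{ij}]$ is a positively graded polynomial ring over $\mathbb{C}$, so the only graded maximal ideal is the irrelevant ideal $\mathfrak{m} = (x_{ij})$, and any cyclic graded $S$-module generated in a single degree is a quotient of a shift of $S$ by a graded ideal.

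First I would handle the finite-length case. Suppose $\ell(M) = \ell < \infty$. Then $M$ admits a composition series
\[
0 = M_0 \subsetneq M_1 \subsetneq \cdots \subsetneq M_\ell = M
\]
in the category of graded $S$-modules, with each successive quotient $M_i/M_{i-1}$ a simple graded $S$-module, hence of $\mathbb{C}$-dimension $1$ by the observation above. Additivity of $\dim_\mathbb{C}$ on short exact sequences of $\mathbb{C}$-vector spaces then yields
\[
\dim_\mathbb{C}(M) = \sum_{i=1}^\ell \dim_\mathbb{C}(M_i/M_{i-1}) = \ell = \ell(M).
\]

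Next I would rule out the mixed cases. If $\dim_\mathbb{C}(M) < \infty$ then $M$ is visibly Artinian and Noetherian as a graded $S$-module (any chain of graded $\mathbb{C}$-subspaces must stabilize), so $\ell(M)$ is finite and we reduce to the previous case. Conversely, if $\ell(M) = \infty$, then $M$ admits an infinite strictly ascending (or descending) chain of graded submodules; choosing a homogeneous element in each strict inclusion produces an infinite $\mathbb{C}$-linearly independent subset of $M$, so $\dim_\mathbb{C}(M) = \infty$.

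There is no real obstacle here—the lemma is essentially a bookkeeping statement recording that, over a $\mathbb{C}$-algebra with residue field $\mathbb{C}$, composition factors of graded modules are one-dimensional. The only point requiring mild care is the implicit convention that $\ell(M)$ denotes length in the graded category; this is consistent with the usage in the rest of the paper, since all modules under consideration (the $\operatorname{Ext}^j_S(S/I^D,S)$ and their local cohomology counterparts) carry canonical $\mathrm{GL}$-equivariant, hence in particular graded, structures.
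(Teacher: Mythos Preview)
Your proof is correct and follows essentially the same idea as the paper's: both arguments rest on the fact that every simple graded $S$-module is isomorphic to $S/\mathfrak{m}\cong\mathbb{C}$, so that counting composition factors is the same as counting $\mathbb{C}$-dimensions. The only cosmetic difference is directional: the paper builds a composition series explicitly from a $\mathbb{C}$-basis of each graded piece (using that top-degree basis elements are annihilated by $\mathfrak{m}$), whereas you start from an abstract composition series and identify its factors; neither approach offers a real advantage over the other here.
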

\begin{proof}
	First assume $M$ is finitely graded over $\mathbb{C}$ and write $M=\oplus_i^\alpha M_i$. We will use the $\mathbb{C}$-vector space basis of each $M_i$ to construct the composition series of $M$ over $S$. Suppose $M_\alpha = \operatorname{span}(x_1,...,x_r)$ and consider the series $$0\subsetneq \operatorname{span}(x_1) \subsetneq \operatorname{span}(x_1,x_2) \subsetneq ... \subsetneq \operatorname{span}(x_1,...x_r)=M_\alpha.$$ Note that each $x_i$ can be annihilated by the maximal ideal $\frakm$ of $S$ since multiplying $x_i$ with elements in $\frakm$ will increase the degree. Since $Sx_i$ is cyclic, we have $Sx_i \cong S/\frakm$. Therefore each quotient of the above series is isomorphic to $S/\frakm$, so the series above is a composition series. Repeat this procedure for each graded piece of $M$ we get a composition series of $M$ and that $\ell(M) = \Dim_\mathbb{C}(M)$. 

	On the other hand if $M$ has infinitely many graded pieces over $\mathbb{C}$ so that $\Dim_\mathbb{C}(M) = \infty$, then the above argument shows that we can form a composition series of infinite length, and so $\ell(M)=\infty$.
\end{proof}

\begin{prop}\label{length_ext_fin}
	In our setting $\ell(\Ext^j_S(S/I^d,S)) < \infty$ and is nonzero if and only if $j=n(m-n)+1$ which corresponds to $s=0$ in Theorem \ref{schur-decompos}, and $d\geq n$. 
\end{prop}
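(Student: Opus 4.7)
The plan is to combine the $\GL$-equivariant decomposition of Theorem \ref{schur-decompos}, specialized to maximal minors in Lemma \ref{weights_max_minors}, with the identification $\ell = \Dim_\CC$ from Lemma \ref{length=dim}. Setting $s$ as in equation (\ref{important_eq}) and letting $W_c$ denote the set $W((c^n), n-1, (n-1), s)$ of dominant $\lambda \in \mathbb{Z}^n$ with $\lambda_n = n-1-c-m$, $\lambda_s \ge s-n$ (vacuous for $s=0$), and $\lambda_{s+1} \le s-m$, I would write
$$
\Ext^j_S(S/I^d, S) = \bigoplus_{0 \le c \le d-1} \bigoplus_{\lambda \in W_c} S_{\lambda(s)}\CC^m \otimes S_\lambda \CC^n.
$$
By Proposition \ref{dim_schur} every $S_\lambda \CC^n$ with $\lambda$ dominant is nonzero, so finiteness of $\ell$ reduces to finiteness of both the index sets $W_c$ and of each individual Schur summand.

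I would then split on $s$. If $s \ge 1$, the only lower bound on $\lambda_1, \ldots, \lambda_s$ is $\lambda_s \ge s-n$, while the other constraints concern only $\lambda_{s+1}, \ldots, \lambda_n$. Assuming $\Ext^j \neq 0$ (guaranteed by Lemma \ref{vanishing_degree} when $m-n \mid 1-j$ and $j \geq 2$), pick some $c$ and values $\lambda_{s+1}, \ldots, \lambda_n$ making $W_c$ nonempty; replacing $\lambda_1 = \cdots = \lambda_s$ by any sufficiently large $N$ yields infinitely many distinct dominant weights in $W_c$, each contributing a nonzero summand. Hence $\Dim_\CC = \infty$, so $\ell = \infty$.

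If $s = 0$, the conditions collapse to $\lambda_1 \le -m$ and $\lambda_n = n - 1 - c - m$, and dominance forces $-m \ge n - 1 - c - m$, i.e., $c \ge n - 1$. Thus $c$ ranges over the finite set $\{n-1, \ldots, d-1\}$, which is empty precisely when $d < n$. When $d \ge n$, every coordinate of every $\lambda \in W_c$ is trapped in the finite interval $[n-1-c-m, -m]$, so each $W_c$ is finite and each $S_\lambda \CC^n$ and $S_{\lambda(s)}\CC^m$ is finite-dimensional. The whole direct sum is therefore finite-dimensional, and nonvanishing follows from Lemma \ref{vanishing_degree}.

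The main obstacle is the bookkeeping in the $s \ge 1$ case, specifically recognizing the asymmetry between $s = 0$, where $\lambda_1 \le -m$ forces an overall upper bound, and $s \ge 1$, where $\lambda_1, \ldots, \lambda_s$ are bounded only from below. Once this observation is made, the rest of the proof is a careful reading of Theorem \ref{schur-decompos} under the constraints extracted in Lemma \ref{weights_max_minors} and Lemma \ref{vanishing_degree}.
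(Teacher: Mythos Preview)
Your proposal is correct and follows essentially the same approach as the paper: both arguments use the decomposition from Theorem \ref{schur-decompos} specialized via Lemma \ref{weights_max_minors}, observe that for $s\ge 1$ the weight set $W$ is unbounded in the $\lambda_1,\ldots,\lambda_s$ coordinates (hence infinite), while for $s=0$ the constraints $\lambda_1\le -m$ and $\lambda_n=n-1-c-m$ trap $\lambda$ in a finite box, and then invoke Lemma \ref{length=dim} and Lemma \ref{vanishing_degree}. Your write-up is in fact slightly more explicit than the paper's in isolating the range $c\in\{n-1,\ldots,d-1\}$ and in phrasing the $s\ge 1$ case, but there is no substantive difference in strategy.
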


\begin{proof}
	The correspondence of the cohomological index and $s$ can be seem in the proof of Lemma \ref{vanishing_degree}, and the condition $d\geq n$ can be seen from Lemma \ref{vanishing_degree} as well. Observe that the decomposition (\ref{decompose_first}) is finite, so we need to consider the decomposition of each $\Ext^j_S(J_{(\underline{z},l)},S)$. Suppose $s=0$. Then we have the restriction 
	\begin{align*}
		\begin{cases}
			\lambda_n=n-1-{z_{n-1}}-m,\\
			\lambda_1 \leq -m.
		\end{cases}
	\end{align*}
	Therefore in this case the set $W(\underline{z},n-1,(n-1),0)$ is bounded above by $(-m,...,-m,n-1-z_{n-1}-m)$ and below by $(-m,n-1-z_{n-1}-m,...,,n-1-z_{n-1}-m)$ and so is a finite set. Thus $\Ext^j_S(J_{(\underline{z},l)},S)$ can be decomposed as a finite direct sum of $S_{\lambda(s)}\mathbb{C}^m \otimes S_\lambda\mathbb{C}^n$ for $\lambda \in W(\underline{z},n-1,(n-1),0)$. By Proposition \ref{dim_schur} it is clear that the dimension of each $S_{\lambda(s)}\mathbb{C}^m \otimes S_\lambda\mathbb{C}^n$ is finite. So by Lemma \ref{length=dim}, $\ell(\Ext^j_S(J_{(\underline{z},l)},S)) = \Dim_\mathbb{C}(\Ext^j_S(J_{(\underline{z},l)},S)) < \infty$.

	On the other hand suppose $s\neq 0$. Then we have the restriction 
	\begin{align*}
		\begin{cases}
			\lambda_n=n-1-{z_{n-1}}-m,\\
			\lambda_s \geq s-n, \lambda_{s+1} \leq s-m.
		\end{cases}
	\end{align*}
	Since $\lambda_s \geq s-n$ implies that any weight that is greater than $(s-n,...,s-n,s-m,,...,s-m,n-1-z_{n-1}-m)$ is in $W(\underline{z},n-1,(n-1),s)$, the set $W(\underline{z},n-1,(n-1),s)$ is infinite, and therefore the decomposition of $\Ext^j_S(J_{(\underline{z},l)},S)$ in this case is infinite. So by Lemma \ref{length=dim} again we have $\ell(\Ext^j_S(J_{(\underline{z},l)},S)) = \Dim_\mathbb{C}(\Ext^j_S(J_{(\underline{z},l)},S)) = \infty$. 
	Therefore $\ell(\Ext^j_S(S/I^d,S)) < \infty$ if and only if $j=n(m-n)+1$.
\end{proof}

\begin{corollary}\label{sum_of_length}
	Let $j=n(m-n)+1$. Then we have $$\ell(\Ext^j_S(S/I^D,S)) = \sum^{D}_{d=n}\ell(\Ext^j_S(I^{d-1}/I^d,S)).$$
\end{corollary}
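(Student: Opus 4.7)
The plan is to establish, for each $n \leq d \leq D$, a short exact sequence whose length is additive, then telescope. I would start by applying $\Hom_S(-, S)$ to the short exact sequence
$$0 \to I^{d-1}/I^d \to S/I^d \to S/I^{d-1} \to 0,$$
producing the long exact sequence fragment
$$\Ext^{j-1}_S(I^{d-1}/I^d, S) \to \Ext^j_S(S/I^{d-1}, S) \xrightarrow{\alpha_d} \Ext^j_S(S/I^d, S) \to \Ext^j_S(I^{d-1}/I^d, S) \to \Ext^{j+1}_S(S/I^{d-1}, S)$$
with $j = n(m-n)+1$. The aim is to show both ends vanish.

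The rightmost term is immediate: Lemma \ref{vanishing_degree} identifies $n(m-n)+1$ as the top nonvanishing cohomological index for quotients $S/I^{d-1}$, so $\Ext^{j+1}_S(S/I^{d-1}, S) = 0$. For the left end it suffices to show $\alpha_d$ is injective when $d \geq n$, which forces the connecting map out of $\Ext^{j-1}_S(I^{d-1}/I^d, S)$ to be zero. To obtain injectivity I would compare the Ext long exact sequences arising from the morphism of short exact sequences $(0 \to I^d \to S \to S/I^d \to 0) \to (0 \to I^{d-1} \to S \to S/I^{d-1} \to 0)$, where the vertical maps are inclusion on the left, identity in the middle, and projection on the right. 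Via Proposition \ref{isom_ext_modules}, the map $\alpha_d$ is then identified with the inclusion-induced map $\Ext^{j-1}_S(I^{d-1}, S) \to \Ext^{j-1}_S(I^d, S)$. Since $d \geq n$ ensures $\Ext^j_S(S/I^d, S) \neq 0$ (Lemma \ref{vanishing_degree}) and hence $\Ext^{j-1}_S(I^d, S) \neq 0$ (Proposition \ref{isom_ext_modules}), Lemma \ref{injectivity} delivers the required injectivity.

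Once both vanishings are in hand we obtain, for each $n \leq d \leq D$, a short exact sequence
$$0 \to \Ext^j_S(S/I^{d-1}, S) \to \Ext^j_S(S/I^d, S) \to \Ext^j_S(I^{d-1}/I^d, S) \to 0.$$
Proposition \ref{length_ext_fin} guarantees the middle term has finite length, hence so do all three terms, so length is additive across this sequence. Iterating the resulting recurrence from $d = D$ down to $d = n$ and using $\Ext^j_S(S/I^{n-1}, S) = 0$ (Lemma \ref{vanishing_degree}, since $n-1 < n$) telescopes to the claimed identity.

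The main obstacle is the bookkeeping that identifies $\alpha_d$ with the map to which Lemma \ref{injectivity} applies; this requires carefully tracking how the degree-shift isomorphism of Proposition \ref{isom_ext_modules} interacts with the maps induced by the morphism of short exact sequences. Once that identification is secured, the remainder of the argument is routine length-additivity and telescoping.
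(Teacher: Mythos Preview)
Your proposal is correct and follows essentially the same route as the paper: derive the long exact sequence from $0\to I^{d-1}/I^d\to S/I^d\to S/I^{d-1}\to 0$, use Proposition~\ref{isom_ext_modules} and Lemma~\ref{injectivity} to identify $\alpha_d$ with an injective map, extract the short exact sequence, and telescope using finiteness from Proposition~\ref{length_ext_fin} and vanishing for $d<n$ from Lemma~\ref{vanishing_degree}. The only cosmetic difference is that you kill the right end by invoking $\Ext^{j+1}_S(S/I^{d-1},S)=0$ (top nonvanishing degree), whereas the paper phrases it as splitting the entire long exact sequence via injectivity at every level; both yield the same short exact sequence at level $j$.
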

\begin{proof}
	Given the short exact sequence $$0\rightarrow I^{d-1}/I^d \rightarrow S/I^d \rightarrow S/I^{d-1} \rightarrow 0$$ we have the induced long exact sequence of $\Ext$-modules 
	\begin{align*}
		... \rightarrow &\Ext^{j-1}_S(I^{d-1}/I^d,S) \rightarrow \Ext^j_S(S/I^{d-1},S) \rightarrow \Ext^j_S(S/I^d,S)\\ \rightarrow &\Ext^j_S(I^{d-1}/I^d,S) \rightarrow \Ext^{j+1}_S(S/I^{d-1},S) \rightarrow ...
	\end{align*}
	By  Proposition \ref{isom_ext_modules} and Lemma \ref{injectivity} the map $\Ext^j(S/I^{d-1},S) \rightarrow \Ext^j(S/I^d,S)$ from the above long exact sequence is injective. Therefore we can split the above long exact sequence into short exact sequences $$0\rightarrow \Ext^j_S(S/I^{d-1},S) \rightarrow \Ext^j_S(S/I^d,S) \rightarrow \Ext^j_S(I^{d-1}/I^d,S) \rightarrow 0.$$ 
	By Lemma 3.3, $\Ext^j_S(S/I^d,S) = 0$ for $d<n$, so $\Ext^j_S(I^{d-1}/I^d) = 0$ for $d<n$ as well. Then by Proposition \ref{length_ext_fin} we have 
	\begin{align*}
		\begin{gathered}
			\ell(\Ext^j_S(I^{d-1}/I^d,S)) = \ell(\Ext^j_S(S/I^d,S)) - \ell(\Ext^j_S(S/I^{d-1},S)) \implies  \\
			\sum^D_{d=2} \ell(\Ext^j_S(I^{d-1}/I^d,S) = \ell(\Ext^j_S(S/I^D,S))-\ell(\Ext^j_S(S/I,S) \xRightarrow{\text{Lemma 3.3}}\\
			\sum^D_{d=n}\ell(\Ext^j_S(I^{d-1}/I^d,S)) = \ell(\Ext^j_S(S/I^D,S)),
		\end{gathered}
	\end{align*}
	as desired.
\end{proof}

\section{Multiplicites of the local cohomology of determinantal thickenings}
In this section we will prove the main result of this note. We recall the statement here.

\begin{theorem}\label{formula_limit}
	\begin{enumerate}
		\item If $j\neq n^2-1$, then $\ell(H^j_\frakm(R/I_n^D))$ is either $0$ or $\infty$. 
		\item If $j=n^2-1$, then $\ell(H^j_\frakm(R/I_n^D)) < \infty$ and is nonzero. Moreover we have $$\epsilon^j(I_n) = (mn)!\prod^{n-1}_{i=0}\dfrac{i!}{(m+i)!},$$ which is rational.
	\end{enumerate}
\end{theorem}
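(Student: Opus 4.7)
The plan is to use graded local duality to translate the local cohomology computation into an $\Ext$-module computation, then apply the representation-theoretic decomposition from Section~3. Since $S$ is a polynomial ring of Krull dimension $mn$, graded local duality gives
\[
\ell(H^j_\frakm(S/I^D)) = \ell(\Ext^{mn-j}_S(S/I^D,S))
\]
for every $j$, and the equivalence $mn-j = n(m-n)+1 \Leftrightarrow j = n^2-1$ matches the dichotomy in the statement. Part (1) is then immediate from Proposition~\ref{length_ext_fin}: for $j \neq n^2 - 1$, the module $\Ext^{mn-j}_S(S/I^D,S)$ is either zero or of infinite length.

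For Part (2), Proposition~\ref{length_ext_fin} ensures that $\ell(\Ext^{n(m-n)+1}_S(S/I^D,S))$ is finite and nonzero for $D \geq n$. I would then invoke Corollary~\ref{sum_of_length} to reduce the problem to computing each summand $\ell(\Ext^{n(m-n)+1}_S(I^{d-1}/I^d,S))$. Comparing the indexing sets $\mathcal{Z}^d_n$ and $\mathcal{Z}^{d-1}_n$ of Theorem~\ref{schur-decompos} via Lemma~\ref{weights_max_minors} shows that the only new pair is $((d-1)^n,n-1)$, so this summand equals $\ell(\Ext^{n(m-n)+1}_S(J_{((d-1)^n,n-1)},S))$. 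Applying Theorem~\ref{schur-decompos} at $s=0$ together with Lemma~\ref{length=dim},
\[
\ell(\Ext^{n(m-n)+1}_S(J_{((d-1)^n,n-1)},S)) = \sum_{\lambda \in W}\Dim_\CC(S_{\lambda(0)}\CC^m \otimes S_\lambda \CC^n),
\]
where $W = W((d-1)^n,n-1,(n-1),0)$ consists of dominant $\lambda \in \mathbb{Z}^n$ with $\lambda_n = n-d-m$ fixed and $\lambda_1 \leq -m$, giving $n-1$ free integer coordinates varying in a region of size $O(d)$.

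To extract the leading order in $d$, substitute $\lambda_i = -du_i$ and expand the Schur dimensions using Proposition~\ref{dim_schur}. The constant block $(-n)^{m-n}$ in $\lambda(0)$ contributes linear factors of the form $(du_k - m + c)/c$ for mixed pairs, while pairs within the non-constant coordinates and all pairs in $\lambda$ yield Vandermonde-like factors. Collecting leading powers of $d$ gives
\[
\Dim_\CC(S_{\lambda(0)}\CC^m)\cdot \Dim_\CC(S_\lambda \CC^n)\ \sim\ C\cdot d^{n(m-1)}\prod_{k=1}^{n}u_k^{m-n}\prod_{1\leq i<j\leq n}(u_j-u_i)^2,
\]
with $C$ an explicit product of reciprocal factorials depending only on $m,n$. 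Converting the sum over the $n-1$ free lattice coordinates to a Riemann integral with spacing $1/d$, and setting $u_n = 1$ so that $u_n^{m-n}=1$ and $\prod_{i<n}(u_n-u_i)^2 = \prod_{i<n}(1-u_i)^2$, yields
$\ell(\Ext^{n(m-n)+1}_S(J_{((d-1)^n,n-1)},S)) \sim C\cdot I_0\cdot d^{mn-1}$, where
\[
I_0 = \int_{0\leq u_1\leq\cdots\leq u_{n-1}\leq 1}\prod_{k=1}^{n-1}u_k^{m-n}(1-u_k)^2\prod_{1\leq i<j\leq n-1}(u_j-u_i)^2\,du_1\cdots du_{n-1}.
\]
Summing over $d$ from $n$ to $D$ (a polynomial sum of degree $mn$) and multiplying by $(mn)!$ gives $\epsilon^{n^2-1}(I_n) = (mn-1)!\cdot C\cdot I_0$.

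The integral $I_0$ is the ordered form of the Selberg integral with parameters $\alpha = m-n+1$, $\beta = 3$, $\gamma = 1$ in $n-1$ variables, and hence admits a closed-form evaluation as a product of factorials. The main obstacle is the bookkeeping: combining $C$, itself a product of reciprocal factorials arising from the denominators $j-i$ and the mixed contributions from the $(-n)^{m-n}$ block, with the Selberg value requires a cascade of factorial cancellations to collapse the expression to the stated product $(mn)!\prod_{i=0}^{n-1}\frac{i!}{(m+i)!}$. A useful sanity check is the $n=2$ case of \cite{Kenkel}: here $I_0$ reduces to a Beta function and one verifies $C\cdot I_0 = 2/((m-1)!(m+1)!)$, recovering $\epsilon^3(I_2) = (2m)!/(m!(m+1)!)$, in agreement with the general formula.
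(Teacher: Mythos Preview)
Your proposal is correct and follows essentially the same route as the paper: local duality reduces to $\Ext^{n(m-n)+1}$, Corollary~\ref{sum_of_length} isolates the single new summand $J_{((d-1)^n,n-1)}$, the Schur dimension formula gives a polynomial in $d$ of degree $mn-1$ whose leading coefficient is a Selberg integral in $n-1$ variables, and the Selberg evaluation collapses to the stated product. The only cosmetic difference is your substitution $\lambda_i=-du_i$, which produces the integrand $\prod u_k^{m-n}(1-u_k)^2\prod(u_j-u_i)^2$ with Selberg parameters $(\alpha,\beta,\gamma)=(m{-}n{+}1,3,1)$, whereas the paper shifts by $n-d-m$ to get $\prod(1-x_i)^{m-n}x_i^2\prod(x_i-x_j)^2$ with $(a,b,c)=(3,m{-}n{+}1,1)$; these are the same integral under $u\mapsto 1-x$.
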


We will first prove the existence of the limit in question.
\begin{prop}\label{existence_limit}
	Let $$C=(mn-1)!\prod_{1\leq i \leq n}\dfrac{1}{(n-i)!(m-i)!}$$ and let $\delta=\{0\leq x_{n-1} \leq ... \leq x_1 \leq 1\}\subseteq \mathbb{R}^{n-1}$, $dA=dx_{n-1}...dx_1$. Then $\ell(H^{n^2-1}_\frakm(R/I^D)) < \infty$ and is nonzero for $D\geq n$. Moreover the limit $\epsilon^{n^2-1}(I) = \lim_{D\rightarrow \infty}\dfrac{(mn)!\ell(H^{n^2-1}_\frakm(S/I^D)}{D^{mn}}$ exists and the formula is given by
	\begin{align}\label{integral_formula}
			 C\bigintss_\delta(\prod_{1\leq i \leq n-1}(1-x_i)^{m-n}x_i^2)(\prod_{1\leq i < j\leq n-1}(x_i-x_j)^2)dA.
\end{align}
\end{prop}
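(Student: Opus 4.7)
The plan is to apply graded local duality to translate the question into one about $\Ext^{n(m-n)+1}_S(S/I^D,S)$, then analyze this Ext module via the $\GL$-equivariant decomposition of Theorem~\ref{schur-decompos} and recognize the resulting sum as a Riemann sum.

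By graded local duality on the polynomial ring $S$ of dimension $mn$, $\ell(H^{n^2-1}_{\frakm}(S/I^D)) = \ell(\Ext^{n(m-n)+1}_S(S/I^D,S))$, so Proposition~\ref{length_ext_fin} immediately gives the finiteness and non-vanishing assertion for $D \geq n$. For the limit, Corollary~\ref{sum_of_length} reduces the computation to $\sum_{d=n}^D \ell(\Ext^j_S(I^{d-1}/I^d,S))$ with $j=n(m-n)+1$. Lemma~\ref{weights_max_minors} guarantees that only the factor $(\underline z,l)=((d-1)^n,n-1)$ contributes, and solving the cohomological constraint $mn - (n-1)^2 - s(m-n) - 2(n-1) = j$ forces $s = 0$, so Theorem~\ref{schur-decompos} expresses the Ext as a direct sum, over dominant $\lambda$ with $\lambda_n = n-d-m$ and $\lambda_1 \leq -m$, of $S_{\lambda(0)}\CC^m \otimes S_\lambda\CC^n$.

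Next we reparametrize via $a_i := \lambda_i - \lambda_n$, so $(a_1,\dots,a_{n-1},0)$ ranges over partitions fitting in the box $[0,d-n]^{n-1}$; Corollary~\ref{same_dim} then allows replacing the weights by these shifted ones without affecting Schur dimensions. By Proposition~\ref{dim_schur}, splitting the product for $\Dim S_{((d-n)^{m-n},a_1,\dots,a_{n-1},0)}\CC^m$ into three blocks according to whether both indices lie in the first $m-n$ positions (trivial block), one in each (cross block), or both in the last $n$ positions (a copy of $\Dim S_{(a_1,\dots,a_{n-1},0)}\CC^n$) yields
\begin{equation*}
\Dim S_{\lambda(0)}\CC^m \cdot \Dim S_\lambda \CC^n = \bigl(\Dim S_{(a_1,\dots,a_{n-1},0)}\CC^n\bigr)^2 \cdot \prod_{k=1}^n \prod_{i=1}^{m-n}\frac{(d-n) - a_k + (m-n+k) - i}{(m-n+k) - i}.
\end{equation*}
Rescaling $a_k = (d-n)x_k$, the cross block asymptotically contributes $(d-n)^{n(m-n)}\prod_{k=1}^{n-1}(1-x_k)^{m-n}\prod_{k=1}^n (k-1)!/(m-n+k-1)!$, and each copy of the $n$-dimensional Schur dimension contributes $(d-n)^{\binom{n}{2}}\prod_{i<j\leq n}(x_i-x_j)/\prod_{k=1}^{n-1}k!$; the $j=n$ terms of the resulting Vandermonde (where $x_n = 0$) produce $\prod_{i=1}^{n-1}x_i^2$.

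Summing over the lattice simplex adds a Riemann-sum factor of $(d-n)^{n-1}$, hence $\ell(\Ext^j_S(I^{d-1}/I^d,S)) \sim (d-n)^{mn-1}\cdot A$ where $A$ is a product of factorials times the integral in~(\ref{integral_formula}). Then $\sum_{d=n}^D (d-n)^{mn-1} \sim D^{mn}/(mn)$, so multiplying by $(mn)!/D^{mn}$ yields $(mn-1)!\cdot A$; a reindexing $k \mapsto n-k$ and $k \mapsto m-k$ in the factorial prefactor identifies this with $C$ times the integral. The principal obstacle is the factorial bookkeeping needed to verify $(mn-1)!\prod_{k=1}^{n-1}(k!)^{-2}\prod_{k=1}^n (k-1)!/(m-n+k-1)! = C$, a short telescoping once the indices are aligned; a secondary technical point is that the Riemann sum converges to the integral with error of strictly lower order in $d$, which follows from a standard Euler-Maclaurin type estimate since each summand is a rational function of $a$ of the correct total degree.
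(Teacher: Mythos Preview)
Your proposal is correct and follows essentially the same route as the paper: local duality, Corollary~\ref{sum_of_length}, the identification of the single new factor $((d-1)^n,n-1)$ via Lemma~\ref{weights_max_minors}, the shift $a_i=\lambda_i-\lambda_n$ (the paper's $\epsilon_i$), the Schur dimension formula, and an Euler--Maclaurin/Riemann-sum passage to the integral. Your block decomposition of $\Dim S_{\lambda(0)}\CC^m$ and the direct Riemann-sum rescaling $a_k=(d-n)x_k$ are a cleaner packaging of the same computation the paper carries out by expanding the products explicitly; one small point to make explicit is that the reduction from the Schur decomposition of $\Ext^j_S(S/I^d,S)$ to that of $\Ext^j_S(I^{d-1}/I^d,S)$ uses Lemma~\ref{injectivity} (not only Lemma~\ref{weights_max_minors}) to identify the quotient with the single new summand, as the paper does.
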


Before we give the proof of the above Proposition, we need to state some results. We will use the local duality to study $\ell(H^j_\frakm(S/I^D))$. Let $M^\vee$ denote the graded Matilis dual of an $R$-module $M$ where $R$ is a polynomial ring over $\mathbb{C}$ such that $$(M^\vee)_\alpha := \Hom_\mathbb{C}(M_-\alpha,\mathbb{C}).$$

\begin{lemma}\label{isom_of_length}
	Let $J$ be any ideal of a ring $S$. We have $\ell(\Ext^j_S(S/J^D,S)) = \ell(H^{\Dim(S)-j}_\frakm(S/J^D))$.
\end{lemma}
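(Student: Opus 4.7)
The plan is to derive the equality of lengths from graded local duality. Concretely, since $S=\mathbb{C}[x_{ij}]$ is a standard graded polynomial ring of dimension $k:=\Dim(S)=mn$ with graded canonical module $\omega_S\cong S(-k)$, for every finitely generated graded $S$-module $M$ graded local duality gives a $\mathbb{C}$-vector space isomorphism
\[
H^i_\frakm(M)\;\cong\;\Ext^{k-i}_S(M,\omega_S)^{\vee}
\]
that is compatible with the grading (up to the shift built into $\omega_S$). Applying this with $M=S/J^D$ and $i=k-j$ yields
\[
H^{k-j}_\frakm(S/J^D)\;\cong\;\Ext^{j}_S(S/J^D,S(-k))^{\vee}.
\]

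Next, I would compare lengths. The shift $S(-k)$ only relabels degrees, so $\Ext^{j}_S(S/J^D,S(-k))$ and $\Ext^{j}_S(S/J^D,S)$ agree as abstract graded $S$-modules up to a degree shift and in particular have the same $\mathbb{C}$-dimension in each graded piece. Likewise, the graded Matlis dual $(-)^\vee$ is defined piece by piece as $\Hom_\mathbb{C}(-,\mathbb{C})$, and hence preserves the $\mathbb{C}$-dimension of each graded component and therefore the total $\mathbb{C}$-dimension. Combining these observations,
\[
\Dim_\mathbb{C}\!\bigl(H^{k-j}_\frakm(S/J^D)\bigr)\;=\;\Dim_\mathbb{C}\!\bigl(\Ext^{j}_S(S/J^D,S)\bigr).
\]

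Finally, I would invoke Lemma \ref{length=dim}, which identifies length with $\mathbb{C}$-dimension for graded $S$-modules (this applies whether the dimension is finite or infinite). Rewriting both sides of the displayed equality via this identification gives
\[
\ell\!\bigl(H^{k-j}_\frakm(S/J^D)\bigr)\;=\;\ell\!\bigl(\Ext^{j}_S(S/J^D,S)\bigr),
\]
which is exactly the claim after setting $k=\Dim(S)$. There is essentially no serious obstacle here: the content is that one needs graded local duality rather than its ungraded form (so that the Matlis dual is pieced together from finite-dimensional vector-space duals and thus preserves $\mathbb{C}$-dimension), and one must be a little careful about the grading shift contributed by $\omega_S$; both issues disappear as soon as one passes to $\mathbb{C}$-dimensions and uses Lemma \ref{length=dim}.
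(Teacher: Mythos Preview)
Your proof is correct and follows essentially the same approach as the paper: both invoke graded local duality $\Ext^j_S(S/J^D,S(-\Dim S))\cong H^{\Dim S-j}_\frakm(S/J^D)^\vee$ and then conclude by observing that degree shifts and graded Matlis duals preserve length. The paper simply declares the last step ``immediate,'' whereas you spell out, via Lemma~\ref{length=dim}, why the $\mathbb{C}$-dimension (hence the length) is unchanged under $(-)^\vee$ and under the shift; this extra care is harmless and arguably clarifying.
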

\begin{proof}
	By the local duality (see \cite[theorem 3.6.19]{BrunsHerzog}), we have $$\Ext^j_S(S/J^D,S(-\Dim(S))) \cong H^{\Dim(S)-j}_\frakm(S/J^D)^\vee.$$ Then the assertion of our lemma is immediate.
\end{proof}
Using this lemma we turn the problem into studying the length of $\Ext$-modules of cohomological degree $n(m-n)+1$. 
In the proof of Theorem \ref{existence_limit}, We will employ part of the strategy used in \cite{Kenkel}. However we will not resort to binomial coefficients since they will be too complicated to study in higher dimensional rings.

\begin{theorem}(Euler-Maclaurin formula, see \cite{Apostol})\label{EM}
	Suppose $f$ is a function with continuous derivative on the interval $[1,b]$, then 
	$$\sum^b_{i=a}f(i) = \int^b_af(x)dx+\dfrac{f(b)+f(a)}{2}+\sum^{\lfloor p/2 \rfloor}_{k=1}\dfrac{B_{2k}}{(2k)!}(f^{(2k-1)}(b)-f^{(2k-1)}(a))+R_p$$ where $B_{2k}$ is the Bernoulli number and $R_p$ is the remaining term.
\end{theorem}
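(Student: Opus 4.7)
The plan is to prove the Euler--Maclaurin formula by repeated integration by parts against the periodic Bernoulli functions. The argument naturally falls into three stages: (i) recording the relevant identities for Bernoulli polynomials; (ii) carrying out a single step of integration by parts on each unit subinterval to obtain the ``trapezoidal'' identity; (iii) iterating the procedure to produce the higher-order derivative terms and identifying the leftover as $R_p$.

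First I would recall the Bernoulli polynomials $B_k(x)$, determined inductively by $B_0(x)=1$, $B_k'(x)=kB_{k-1}(x)$, and $\int_0^1 B_k(x)\,dx=0$ for $k\ge 1$. From this one deduces $B_k(0)=B_k(1)=:B_k$ for $k\ge 2$, $B_1(1)-B_1(0)=1$, and the crucial vanishing $B_{2j+1}=0$ for $j\ge 1$. I would then define the periodic extensions $\widetilde B_k(x):=B_k(\{x\})$; for $k\ge 2$ these are continuous at every integer, which is precisely what lets the interior boundary contributions telescope.

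Next I would treat a single unit interval $[i,i+1]$ with $i$ an integer in $[a,b-1]$. Using $1=\widetilde B_1'(x)$ on $(i,i+1)$ and integrating by parts, one obtains
\[
\int_i^{i+1} f(x)\,dx = \tfrac12\bigl(f(i)+f(i+1)\bigr) - \int_i^{i+1} f'(x)\,\widetilde B_1(x)\,dx.
\]
Summing over $i=a,\dots,b-1$ and rearranging gives the first-order identity
\[
\sum_{i=a}^b f(i) = \int_a^b f(x)\,dx + \tfrac12(f(a)+f(b)) + \int_a^b f'(x)\,\widetilde B_1(x)\,dx.
\]
Now I would iterate: on each unit interval, use $\widetilde B_k=\tfrac{1}{k+1}\widetilde B_{k+1}'$ and integrate by parts once more. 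Summing over $i$, the intermediate integer boundary terms cancel thanks to the continuity of $\widetilde B_{k+1}$ at integers (for $k+1\ge 2$), so only the endpoints $a$ and $b$ contribute; the new boundary term at stage $k$ has the form $\pm\tfrac{B_{k+1}}{(k+1)!}\bigl(f^{(k)}(b)-f^{(k)}(a)\bigr)$.

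Finally, using $B_{2j+1}=0$ for $j\ge 1$, only the even-indexed corrections survive, and a sign check shows they come with positive sign, producing the stated sum $\sum_{k=1}^{\lfloor p/2\rfloor}\tfrac{B_{2k}}{(2k)!}\bigl(f^{(2k-1)}(b)-f^{(2k-1)}(a)\bigr)$; the leftover integral against $\widetilde B_p$ becomes the remainder $R_p$. The main obstacle I anticipate is bookkeeping rather than conceptual: one must track signs carefully through $p$ integrations by parts, implicitly strengthen the stated ``continuous derivative'' hypothesis to $C^p$ so that the iterated integrations by parts are justified, assume $a,b$ are integers so that the trapezoidal correction $\tfrac{f(a)+f(b)}{2}$ makes sense, and verify that the boundary terms at intermediate integers genuinely cancel at every stage thanks to the continuity of $\widetilde B_k$ for $k\ge 2$.
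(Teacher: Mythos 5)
The paper does not prove this statement at all -- it is quoted as a classical result with a citation to Apostol, and only the integral term is ever used later. Your proposal is the standard (and correct) proof by integration by parts against the periodized Bernoulli polynomials $\widetilde B_k(x)=B_k(\{x\})$, which is essentially the argument in the cited reference; your side remarks are also apt, namely that the hypothesis must implicitly be strengthened to $f\in C^p$ with $a,b$ integers, and that the cancellation at interior integers uses the continuity of $\widetilde B_k$ for $k\ge 2$ together with $B_{2j+1}=0$ for $j\ge 1$.
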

For our application we only need to use the integral part on the RHS of the above formula. A well-known consequence is the following.
\begin{corollary}(Faulhaber's formula)\label{Faulhaber}
	The closed formula of the sum of $p$-th power of the first $b$ integers can be written as 
	\begin{align*}
		\sum^b_{k=1}k^p = \dfrac{b^{p+1}}{p+1}+\dfrac{1}{2}b^p+\sum^p_{k=2}\dfrac{B_k}{k!}\dfrac{p!}{p-k+1!}b^{p-k+1}.
	\end{align*}
Again the $B_k$ is the Bernoulli number. In particular, the sum on the LHS can be expressed as a polynomial of degree $p+1$ in $b$ with leading coefficient $\dfrac{1}{p+1}$.
\end{corollary}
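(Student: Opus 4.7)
The plan is to apply Theorem~\ref{EM} directly to the smooth function $f(x) = x^p$ on the interval $[1,b]$ and then simplify the resulting expression. The ingredients needed to feed into Euler--Maclaurin are all routine: the integral $\int_1^b x^p\,dx = \frac{b^{p+1}}{p+1} - \frac{1}{p+1}$ supplies the leading term, the boundary contribution is $\frac{f(b)+f(1)}{2} = \frac{b^p + 1}{2}$, and the odd-order derivatives are $f^{(2k-1)}(x) = \frac{p!}{(p-2k+1)!}\,x^{p-2k+1}$ whenever $2k-1 \le p$ and vanish identically otherwise because $f$ is a polynomial of degree $p$.

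Plugging these into Theorem~\ref{EM} and pushing the expansion to order at least $p$, I would obtain an identity of the shape
\begin{align*}
\sum_{i=1}^b i^p = \frac{b^{p+1}}{p+1} + \frac{1}{2}b^p + \sum_{k=1}^{\lfloor p/2\rfloor}\frac{B_{2k}}{(2k)!}\cdot\frac{p!}{(p-2k+1)!}\,b^{p-2k+1} + C_p + R_p,
\end{align*}
where $C_p$ gathers the $b$-independent contributions coming from the lower limit $1$ of each piece. To convert the range of summation from $k = 1,\dots,\lfloor p/2\rfloor$ (over even derivative orders) to $k = 2,\dots,p$ (as stated in the corollary), I would invoke the classical vanishing of odd Bernoulli numbers $B_k = 0$ for odd $k \ge 3$; this enlarges the index set without changing the value of the sum and reproduces the stated shape.

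The remainder $R_p$ in Theorem~\ref{EM} is expressed as an integral against a periodic Bernoulli polynomial of the top-order derivative of $f$, and since every derivative of $x^p$ of order $> p$ vanishes identically, $R_p = 0$ once the expansion is pushed past order $p$. Justifying this vanishing cleanly is the one step that is not purely mechanical and will be the main obstacle; if the integral form of $R_p$ is awkward to handle directly, an equivalent route is to establish independently (by induction on $p$ using $\sum_{i=1}^b i^p = \sum_{i=1}^{b-1} i^p + b^p$) that the LHS is a polynomial of degree $p+1$ in $b$, and then match coefficients with the truncated Euler--Maclaurin expansion to force $R_p \equiv 0$.

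With the expansion in hand, the final assertion is immediate by inspection: the only summand carrying $b^{p+1}$ is $\tfrac{1}{p+1}b^{p+1}$, and every other summand is a polynomial in $b$ of degree at most $p$, so $\sum_{i=1}^{b} i^p$ is a polynomial in $b$ of degree $p+1$ with leading coefficient $\tfrac{1}{p+1}$, which is the only feature of Faulhaber's formula that will actually be consumed in the proof of Proposition~\ref{existence_limit}.
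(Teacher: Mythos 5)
The paper offers no proof of this corollary at all---it is cited as a well-known consequence of Theorem \ref{EM}---and your derivation (apply Euler--Maclaurin to $f(x)=x^p$ on $[1,b]$, note the remainder vanishes because all derivatives of $x^p$ of order $>p$ are identically zero, and pass from even indices to $k=2,\dots,p$ via $B_k=0$ for odd $k\ge 3$) is exactly the route the paper intends, so your approach matches. The one loose end is that you leave the constant $C_p$ unevaluated, so the closed formula as literally stated is only obtained up to an additive constant; this is harmless, since $C_p=0$ is a routine check (for instance, run the same expansion from $a=0$, where the lower-limit contributions cancel), and in any case the only feature consumed in Proposition \ref{existence_limit} is the degree-$(p+1)$, leading-coefficient-$\tfrac{1}{p+1}$ statement, which your argument does establish completely.
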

\begin{proof}[Proof of Proposition \ref{existence_limit}]
	Let $s$ be as in Theorem \ref{schur-decompos}. By Lemma \ref{vanishing_degree}, we have $\Ext^j_S(S/I^D,S)\neq 0$ for $s=0$, so $\epsilon^j(I) \neq 0$. Again the first claim follows from Proposition \ref{length_ext_fin} and Lemma \ref{isom_of_length}. We will prove the second claim. We first consider the length of $\Ext^j_S(I^{d-1}/I^d,S)$. By Corollary \ref{sum_of_length}, we need to calculate each $\ell(\Ext^j_S(I^{d-1}/I^d,S))$. By Lemma \ref{injectivity}, in order to calculate $\ell(\Ext^j_S(I^{d-1}/I^d,S)$ we only need to calculate the dimension of the tensor products of Schur modules that is in $\Ext^j_S(S/I^{d-1})$ but not in $\Ext^j_S(S/I^d,S)$. By Lemma \ref{weights_max_minors}, we need to consider the $\underline{z}$ such that $\{z_1=...=z_n=d-1\}$. This means we are considering the weights 
\begin{align*}
	\begin{cases}
		\lambda_n=n-d-m,\\
		\lambda_1\leq -m.
	\end{cases}
\end{align*}
Adopting the notations of \cite{Kenkel}, we can write 
\begin{align*}
	\lambda &= (\lambda_1,\lambda_2,...\lambda_n)\\
		&= (\lambda_n+\epsilon_1,\lambda_n+\epsilon_2,...,\lambda_n)\\
		&= (n-d-m+\epsilon_1, n-d-m+\epsilon_2,...,n-d-m).
\end{align*}
Since $\lambda_1\leq -m$, it follows that $n-d-m \leq n-d-m+\epsilon_1 \leq m \implies 0\leq \epsilon_1 \leq d-n$. Since $\lambda$ is dominant, we have $0\leq \epsilon_{n-1}\leq ... \leq \epsilon_1 \leq d-n$. By Corollary \ref{same_dim}, we have $$\Dim(S_\lambda\mathbb{C}^n) = \Dim(S_{(\epsilon_1,...,\epsilon_{n-1},0)}\mathbb{C}^n$$ by adding $((n-d-m)^n)$ to $\lambda$. Therefore the dimension of $S_\lambda \mathbb{C}^n$ is given by 
\begin{align}\label{dim_small}
	\Dim(S_\lambda \mathbb{C}^n) = \Dim(S_{(\epsilon_1,...,\epsilon_{n-1},0)}\mathbb{C}^n) = (\prod_{1\leq i < j\leq n-1}\dfrac{\epsilon_i - \epsilon_j + j-i}{j-i})(\prod_{1\leq i \leq n-1}\dfrac{\epsilon_i+n-i}{n-i}).
\end{align}

Now we look at $S_{\lambda(0)}\mathbb{C}^m$. By definition $\lambda(0)=((-m)^{m-n},\lambda_1,...,\lambda_n)$. Use Corollary \ref{same_dim} again by adding $((n-d-m)^m)$ to $\lambda(0)$ we get that
\begin{align}\label{dim_large}
	\begin{split}
		\Dim(S_{\lambda(0)}\mathbb{C}^m) &= \Dim(S_{((d-n)^{(m-n)},\epsilon_1,...,\epsilon_{n-1},0)}\mathbb{C}^m\\
						 &= (\prod_{1\leq i\leq n-1}\dfrac{j-i}{j-i})(\prod_{1\leq i \leq m-n}\dfrac{d-\epsilon_1+m-2n+1-i}{m-n+1-i})\\
						 & (\prod_{1\leq i\leq m-n}\dfrac{d-\epsilon_2+m-2n+2-i}{m-n+2-i})(\epsilon_1-\epsilon_2+1)\\
						 &...\\
						 & (\prod_{1\leq i\leq m-n}\dfrac{d-n+m-i}{m-i})(\epsilon_{n-1}+1)(\dfrac{\epsilon_{n-2}+2}{2})...(\dfrac{\epsilon_1+n-1}{n-1}).
	\end{split}
\end{align}
Multiplying (\ref{dim_small}) and (\ref{dim_large}) we get that
\begin{align}\label{dim_multiply}
	\begin{split}
		\Dim(S_{\lambda(0)}\mathbb{C}^m \otimes S_\lambda\mathbb{C}^n)
		&= \Dim(S_{\lambda(0)}\mathbb{C}^m) \times \Dim(S_\lambda\mathbb{C}^n) \\
		&= \big(\prod_{1\leq i \leq m-n}\dfrac{d-n+m-i}{m-i}\big)\\
		&\Big(\big(\prod_{1\leq i \leq m-n}\dfrac{d-\epsilon_1-2n+m+1-i}{m-n+1-i}\big)\big(\dfrac{\epsilon_1+n-1}{n-1}\big)^2\\
		&\big(\prod_{1\leq i \leq m-n}\dfrac{d-\epsilon_2-2n+m+2-i}{m-n+2-i}\big)\big(\epsilon_1-\epsilon_2+1)^2\big(\dfrac{\epsilon_2+n-2}{n-2}\big)^2\\
		&...\\
		&\big(\prod_{1\leq i\leq m-n}\dfrac{d-n-\epsilon_{n-1}+m-1-i}{m-1-i}\big)(\epsilon_{n-2}-\epsilon_{n-1}+1)^2...(\epsilon_{n-1}+1)^2\Big)
	\end{split}
\end{align}
The formula (\ref{dim_multiply}) is for a particular choice of $\epsilon_1,...,\epsilon_{n-1}$. To calculate $\ell(\Ext^j_S(I^{d-1}/I^d,S))$ we need to add the result of all possible choices of $\epsilon_1,...,\epsilon_{n-1}$. After some manipulations we will end up with 
\begin{align}\label{dim_formula_sep}
	\ell(\Ext^j_S(I^{d-1}/I^d,S) &= \sum_{0\leq \epsilon_{n-1}\leq ... \leq \epsilon_1\leq d-n}(\ref{dim_multiply})\\
	\tag{\ref{dim_formula_sep} - 0} &= (\prod_{1\leq i\leq m-n}\dfrac{d-n+m-i}{m-i}) \\ 
	\tag{\ref{dim_formula_sep} - 1}	     &\Big(\sum^{d-n}_{\epsilon_1=0}\big(\prod_{1\leq i \leq m-n}\dfrac{d-\epsilon_1-2n+m+1-i}{m-n+1-i}\big)\big(\dfrac{\epsilon_1+n-1}{n-1}\big)^2\\
	\tag{\ref{dim_formula_sep} - 2}			     &(\sum^{\epsilon_1}_{\epsilon_2=0}\big(\prod_{1\leq i\leq m-n}\dfrac{d-\epsilon_2-2n+m+2-i}{m-n+2-i}\big)(\epsilon_1-\epsilon_2+1)^2\big(\dfrac{\epsilon_2+n-2}{n-2}\big)^2\\
	\notag					     &...\\
\tag{\ref{dim_formula_sep} - (n-2)}	     &(\sum^{\epsilon_{n-3}}_{\epsilon_{n-2}=0}\big(\prod_{1\leq i\leq m-n}\dfrac{d-n-\epsilon_{n-2}+m-2-i}{m-2-i}\big)(\epsilon_{n-3}-\epsilon_{n-2}+1)^2...(\dfrac{\epsilon_{n-2}+2}{2})^2\big)\\
	\tag{\ref{dim_formula_sep} - (n-1)}	     &(\sum^{\epsilon_{n-2}}_{\epsilon_{n-1}=0}\big(\prod_{1\leq i\leq m-n}\dfrac{d-n-\epsilon_{n-1}+m-1-i}{m-1-i}\big)(\epsilon_{n-2}-\epsilon_{n-1}+1)^2...(\epsilon_{n-1}+1)^2\big)...\Big)
\end{align}
Now Corollary \ref{Faulhaber} shows that the above sum will be a polynomial in $d$, and we need to calculate its degree. Corollary \ref{Faulhaber} also implies that when looking at each sum of (\ref{dim_formula_sep}) we only need to look at the summands that will contribute to the highest degree of the resulting polynomial. We see that the sum (\ref{dim_formula_sep} - (n-1)) can be expressed as a degree $m-n+2(n-1)+1 = m+n-1$ polynomial in $\epsilon_{n-2}$. Similarly (\ref{dim_formula_sep} - (n-2)) can be expressed as a degree $2m+2n-4$ polynomial in $\epsilon_{n-3}$. Continuing in this fashion we see that the sum (\ref{dim_formula_sep} - 1) can be expressed as a degree $mn-m+n-1$ polynomial in $d$. Multiplying (\ref{dim_formula_sep} - 0) with (\ref{dim_formula_sep} - 1) will result in a degree $mn-1$ polynomial.

Moreover, after factoring out the coefficients of the terms that will eventually contribute to the highest degree of the resulting polynomial of (\ref{dim_formula_sep}) and then apply Theorem \ref{EM} to the sum of said terms, the leading coefficient of the resulting polynomial of (\ref{dim_formula_sep}) is given by  
\begin{align*}
		\begin{gathered}
			\prod_{1\leq i\leq n}\dfrac{1}{(n-i)!(m-i)!} \lim_{d\rightarrow \infty}\dfrac{\bigintss_\Delta(\prod_{1\leq i \leq n-1}(d-x_i)^{m-n})(\prod_{1\leq i \leq n-1}x_i^2)(\prod_{1\leq i < j\leq n-1}(x_i-x_j)^2)dA}{d^{mn-m+n-1}},\\
			\Delta = \{0 \leq x_{n-1} \leq ... \leq x_1 \leq d-n \}.
\end{gathered}
\end{align*}
where the factor $\frac{1}{(m-1)!(n-1)!}$ comes from $(\ref{dim_formula_sep} - 0)$ and the coefficients of $(\dfrac{\epsilon_i}{n-i})^2$, and the product $\prod_{2\leq i\leq n}\frac{1}{(n-i)!(m-i)!}$ comes from the coefficients of the needed terms from the rest of $(\ref{dim_formula_sep})$. Since the above limit exists and the integrand is a homogeneous polynomial in $d,x_1,...,x_{n-1}$, we can simplify it to
\begin{align}\label{formula_mid}
		\begin{gathered}
			\prod_{1\leq i\leq n}\dfrac{1}{(n-i)!(m-i)!} \bigintss_\delta(\prod_{1\leq i \leq n-1}(1-x_i)^{m-n}x_i^2)(\prod_{1\leq i < j\leq n-1}(x_i-x_j)^2)dA,\\
			\delta = \{0 \leq x_{n-1} \leq ... \leq x_1 \leq 1 \}.
\end{gathered}
\end{align}

By Corollary \ref{sum_of_length} we need to sum $\ell(\Ext^j_S(I^{d-1}/I^d,S))$ over all $n \leq d\leq D$ to get $\ell(\Ext^j_S(S/I^D,S))$. It is clear that by Corollary \ref{Faulhaber} the sum 
\begin{align}\label{sum_all_d}
\ell(\Ext^j_S(S/I^D,S))=\sum^D_{d=n}\ell(\Ext^j_S(I^{d-1}/I^d,S))
\end{align}
can be expressed as a polynomial in $D$ of degree $mn$. By Lemma \ref{isom_of_length} we see that $\ell(H^{mn-j}_\frakm(S/I^D))$ is a polynomial in $D$ of degree $mn$ as well. Therefore we have $$\epsilon^{mn-j}(I) = \lim_{D\rightarrow \infty}\dfrac{(mn)!\ell(H^{mn-j}_\frakm(S/I^D))}{D^{mn}} < \infty,$$ where $mn-j = mn-n(m-n)-1 = n^2-1$.

Finally, apply Corollary 4.4 to (\ref{sum_all_d}), we see that the leading coefficient of the resulting polynomial of (\ref{sum_all_d}) is given by multiplying $1/mn$ to $(\ref{formula_mid})$, then multiplying the result with $(mn)!$ yields the desired formula. 
\end{proof}

We will give some examples of the above formula.

\begin{example}
	Let $n=2$ and $j=3$. By Lemma \ref{vanishing_degree} and Lemma \ref{length_ext_fin}, $H^3_\frakm(S/I^D)\neq 0$ and has finite length. The integral we need to calculate is simply $$\int^{1}_{0}(1-x_1)^{m-2}x_1^2dx_1 = \dfrac{2}{m^3-m}.$$ Since $C=\frac{(2m-1)!}{(m-1)!(m-2)!}$, we get that $$\epsilon^3(I) = \dfrac{1}{(m+1)!m!}(2m)! = \dfrac{1}{m+1}\binom{2m}{m}.$$ This recovered the result from \cite[Corollary 1.2]{Kenkel}.
\end{example}

\begin{example}
	Let $n=3$ and $j=8$. Again one can check with Lemma \ref{vanishing_degree} and \ref{length_ext_fin} that $H^8_\frakm(S/I^D)\neq 0$ and has finite length. By Proposition \ref{existence_limit} we first calculate the integral $$\bigintss_{0\leq x_2 \leq x_1 \leq 1}(1-x_1)^{m-3}(1-x_2)^{m-3}x_1^2x_2^2(x_1-x_2)^2 dx_2dx_1.$$ This can be done by doing integration by parts multiple times or simply use Sage. The result is $\frac{12}{m^2(m^2-4)(m^2-1)^2}$. 

	We also have $C=(3m-1)!\frac{1}{(m-3)!}\frac{1}{(m-2)!}\frac{1}{2(m-1)!}$. Therefore 
\begin{align*}
	\epsilon^8(I) &= (3m-1)!\dfrac{12}{m^2(m^2-4)(m^2-1)^2(m-3)!(m-2)!2(m-1)!}\\
		      &= (3m)!\dfrac{2}{(m+2)!(m+1)!m!}.
\end{align*}
\end{example}

The examples above hinted that $\epsilon^{n^2-1}(I)$ should be $(mn)!\prod^{n-1}_{i=0}\dfrac{i!}{(m+i)!}$ as stated in the main theorem, and we will prove that this is indeed the case. We first recall a classical result of Atle Selberg. For English reference one might check \cite[(1.1)]{ForresterWarnaar}.
\begin{theorem}(See \cite{Selberg})\label{Selberg_int_thm} 
	For $a, b$ and $c$ in $\mathbb{C}$ such that $\operatorname{Re}(a) > 0$, $\operatorname{Re}(b) > 0$ and $\operatorname{Re}(c) > -\operatorname{min}\{1/n, \operatorname{Re}(a)/(n-1), \operatorname{Re}(b)/(n-1)\}$ we have 
	\begin{align*}\label{Selberg_int}
		S_n(a,b,c) &= \bigintss_{[0,1]^n}\prod^n_{i=1}x_i^{a-1}(1-x_i)^{b-1}\prod_{1\leq i < j \leq n}|x_i-x_j|^{2c}dA \\
			   &= \prod^{n-1}_{i=0} \dfrac{\Gamma(a+ic)\Gamma(b+ic)\Gamma(1+(i+1)c)}{\Gamma(a+b+(n+i-1)c)\Gamma(1+c)}.
	\end{align*}
	where $\Gamma$ is the usual Gamma function $\Gamma(k)=(k-1)!$.
\end{theorem}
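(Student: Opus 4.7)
The plan is to follow Aomoto's well-known approach based on a functional equation in $a$. First, I would handle the base case $n=1$: the integral $S_1(a,b,c)$ reduces to the Beta integral $\int_0^1 x^{a-1}(1-x)^{b-1}\,dx = \Gamma(a)\Gamma(b)/\Gamma(a+b)$, which agrees with the stated product for $n=1$ (the factor $\Gamma(1+c)/\Gamma(1+c)$ is just $1$).

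Next, for general $n$, I would set $F(x) = \prod_i x_i^{a-1}(1-x_i)^{b-1}\prod_{i<j}|x_i-x_j|^{2c}$ and exploit the vanishing identity
\begin{equation*}
0 = \int_{[0,1]^n} \frac{\partial}{\partial x_k}\bigl(x_k\,F(x)\bigr)\,dA,
\end{equation*}
valid whenever $\operatorname{Re}(a) > 0$, so that the boundary contributions vanish. Expanding the derivative, summing over $k$, and exploiting the $\mathfrak{S}_n$-symmetry of $F$ via standard symmetric-function manipulations should close the resulting system and yield the ratio
\begin{equation*}
\frac{S_n(a+1,b,c)}{S_n(a,b,c)} = \prod_{i=0}^{n-1}\frac{a+ic}{a+b+(n+i-1)c}.
\end{equation*}
The involution $x_i \mapsto 1-x_i$ would then give the analogous recursion in $b$.

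Together with the $n=1$ base case, these two recursions pin down $S_n(a,b,c)$ on the lattice of positive-integer shifts of any admissible base point, and meromorphic continuation in $(a,b,c)$ (justified by the convergence hypotheses built into the statement) extends the identification to the full admissible parameter range. A direct verification that the product formula on the right-hand side satisfies both recursions then completes the argument.

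The hard part will be the combinatorial step in deriving the Aomoto recursion: identifying the right linear combination of integration-by-parts relations so that the non-$S_n$ and $S_{n\pm 1}$ pieces cancel and one is left with a clean closed ratio requires careful bookkeeping with the symmetric rational functions $\sum_{k}x_k^\alpha/(x_k-x_j)$. The subsequent meromorphic continuation is routine but will demand a check of absolute convergence near the critical parameter values $c = -1/n$ and $c=-\operatorname{Re}(a)/(n-1)$, $c=-\operatorname{Re}(b)/(n-1)$ delineating the admissible region.
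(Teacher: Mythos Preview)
The paper does not contain a proof of this theorem: it is quoted as a classical result of Selberg, with references to \cite{Selberg} and the survey \cite{ForresterWarnaar}, and is invoked only as a black box in the proof of Theorem~\ref{formula_limit}. There is therefore no paper-internal argument to compare your proposal against.

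As to your plan itself, the Aomoto integration-by-parts strategy is indeed one of the standard modern proofs, but two points deserve care. First, simply summing the identities $\int \partial_{x_k}(x_k F)\,dA=0$ over $k$ does not by itself produce the clean ratio $S_n(a+1,b,c)/S_n(a,b,c)$; Aomoto's actual argument introduces the auxiliary integrals $A_k=\int x_1\cdots x_k\,F\,dA$ and derives a one-step recursion $A_k/A_{k-1}=\dfrac{a+(n-k)c}{a+b+(2n-k-1)c}$, which then telescopes from $A_0=S_n(a,b,c)$ to $A_n=S_n(a+1,b,c)$. Your sketch should make this explicit. Second, and more seriously, the recursions you describe shift $a$ and $b$ by integers but leave $n$ and $c$ fixed, so for each fixed pair $(n,c)$ they determine $S_n(a,b,c)$ only up to an overall multiplicative function of $c$. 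The $n=1$ base case cannot supply this normalization when $n\geq 2$, since nothing in your scheme connects different values of $n$. You need an additional ingredient---for instance the limit $c\to 0$, where the integrand factors into $n$ independent Beta integrals, or a degeneration such as $a\to 0^+$ that links $S_n$ to $S_{n-1}$ via the residue at the resulting pole---to pin down that missing $c$-dependent factor and complete the induction on $n$.
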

Now we can prove Theorem \ref{formula_limit}.

\begin{proof}[Proof of Theorem \ref{formula_limit}]	
	(1) Follows from Lemma \ref{vanishing_degree}, Proposition \ref{length_ext_fin} and Lemma \ref{isom_of_length}.

	(2) By Proposition \ref{existence_limit} it remains to evaluate $$C\bigintss_\delta \prod^{n-1}_{i=1}(1-x_i)^{m-n}x_i^2\prod_{1\leq i < j \leq n-1}(x_i-x_j)^2 dA$$ where $$C = (mn-1)!\prod^n_{i=1}\dfrac{1}{(m-i)!(n-i)!},  \delta = \{0 \leq x_{n-1} \leq ... \leq x_1 \leq 1\}.$$

	By Theorem \ref{Selberg_int_thm} we have 
	\begin{align*}
		& C\bigintss_{[0,1]^{n-1}}\prod^{n-1}_{i=1}(1-x_i)^{m-n}x_i^2\prod_{1\leq i < j \leq n-1}(x_i-x_j)^2 dA
	     \\ &= C \prod^{n-2}_{i=0}\dfrac{\Gamma(3+i)\Gamma(m-n+1+i)\Gamma(2+i)}{\Gamma(m+i+2)\Gamma(2)} \\ &= C \prod^{n-2}_{i=0}\dfrac{(2+i)!(m-n+i)!(1+i)!}{(m+i+1)!}
	     \\ &= \dfrac{(mn)!}{mn}\dfrac{1}{(m-n)!}\prod^{n-1}_{i=1}\dfrac{1}{(m-i)!(n-i)!}\prod^{n-1}_{i=1}\dfrac{(1+i)!(m-n+i-1)!(i)!}{(m+i)!}
	     \\ &= \dfrac{(mn)!}{mn}\dfrac{1}{(m-n)!}\prod^{n-1}_{i=1}\dfrac{(1+i)!}{(m-n+i)!(m-n+i)...(m+i)}
	     \\ &= \dfrac{(mn)!}{mn}\dfrac{1}{(m-n)!}\dfrac{(m-n)!}{(m-1)!}\prod^{n-1}_{i=1}\dfrac{(1+i)!}{(m+i)!}
	     \\ &= \dfrac{(mn)!}{n}\prod^{n-1}_{i=0}\dfrac{1}{(m+i)!}\prod^{n-1}_{i=1}(1+i)!
	\end{align*}
	Since the integrand $\prod^{n-1}_{i=1}(1-x_i)^{m-n}x_i^2\prod_{1\leq i < j \leq n-1} (x_i-x_j)^2$ does not change under permutation of variables, we have 
	\begin{align*}
		& \bigintss_{[0,1]^{n-1}}\prod^{n-1}_{i=1}(1-x_i)^{m-n}x_i^2\prod_{1\leq i < j \leq n-1}(x_i-x_j)^2 dA 
		\\ = & (n-1)!\bigintss_\delta \prod^{n-1}_{i=1}(1-x_i)^{m-n}x_i^2\prod_{1\leq i < j \leq n-1}(x_i-x_j)^2 dA
	\end{align*}
	Hence we have 
	\begin{align*}
		 \epsilon^{n^2-1}(I) & = C\bigintss_\delta \prod^{n-1}_{i=1}(1-x_i)^{m-n}x_i^2\prod_{1\leq i < j \leq n-1}(x_i-x_j)^2 dA \\
		& = \dfrac{1}{(n-1)!}\dfrac{(mn)!}{n}\prod^{n-1}_{i=0}\dfrac{1}{(m+i)!}\prod^{n-1}_{i=1}(1+i)! \\
		& = (mn)!\prod^{n-1}_{i=0}\dfrac{i!}{(m+i)!}.
	\end{align*}
\end{proof}

\section{Acknowledgement}
The author would like to thank her advisor, Wenliang Zhang, for suggesting this problem and his guidance throughout the preparation of this paper, and Tian Wang for helpful suggestions.


\begin{thebibliography}{HKM}
\bibitem[Ap]{Apostol}
T.M.~Apostol, \emph{An Elementary View of Euler's Summation Formula},The American Mathematical Monthly, \textbf{106}, Taylor \& Francis, Ltd., 409--418, 1999. 

\bibitem [BR]{BuchsbaumRim}
D.~Buchsbaum, D.~Rim, \emph{A Generalized Koszul Complex. II. Depth and Multiplicity}, Trans. Amer. Math. Soc. ,\textbf{111}, (1963), 197--224.

\bibitem[BH]{BrunsHerzog}
W.~Bruns and J.~Herzog, \emph{Cohen-Macaulay rings}, Cambridge Studies in Advanced Mathematics, \textbf{39}. Cambridge University Press, Cambridge, 1993.

\bibitem[CHST]{CutkoskyHST}
S.D. ~Cutkosky, H.T. H\`{a}, H.~Srinivasan, E.~Theodorescu, \emph{Asymptotic Behavior of the Length of Local Cohomology}, Canadian Journal of Mathematics, \textbf{57} (2018), 1178 -- 1192.

\bibitem[DEP]{DeConciniEisenbudProcesi}
C.~DeConcini, D.~Eisenbud and C.Procesi, \emph{Young Diagrams and Determinantal Varieties}, Inventiones math \textbf{56} (1980), 129--165.


\bibitem[DM]{DaoMontano19}
H.~Dao and J.~Monta\~{n}o, \emph{Length of Local Cohomology of Powers of Ideals}, Trans.Amer.Soc,\textbf{371}. (2019), no 5,3483--3503.

\bibitem[FW]{ForresterWarnaar}
P.J.~Forrester and S.O.~Warnaar, \emph{The importance of Selberg integral}, Bull. Amer. Math. Soc, \textbf{45}, 2008, 489-534.

\bibitem[FH]{FultonHarris}
W.~Fulton and J.~Harris, \emph{Representation Theory
A First Course},  Graduate Texts in Mathematics, \textbf{129}. (2004), Springer, New York, NY.

\bibitem[Hu]{HunekeLecture}
C.~Huneke, \emph{Lectures on local cohomology}, Contemp. Math., \textbf{436}, Interactions between homotopy theory and algebra, 51--99, Amer. Math. Soc., Providence, RI, 2007. 

\bibitem[Hu81]{Huneke}
C.~Huneke, \emph{Powers of Ideals Generated
by Weak d-Sequences}, Journal of Algebra, \textbf{68}, 471--509 (1981).

\bibitem[JMV]{JeffriesMontanoVarbaro}
J.~Jeffries, J.~Monta\~{n}o and M.~Varbaro, \emph{Multiplicities of classical varieties}, Proc.London.Mac.Soc.(3), \textbf{110}, 1033--1055, 2015.

\bibitem[KV]{KatzValidashti}
D.~Katz and J.~Validashti, \emph{Multiplicities and Rees valuations}, Collect. Math, \textbf{61} (2010), 1-24.

\bibitem[Ke]{Kenkel}
J.~Kenkel, \emph{Length of Local Cohomology of Thickenings}, arXiv:1912.02917.

\bibitem[Ra]{Raicu}
C.~Raicu, \emph{Regularity and cohomology of determinantal thickenings}, Proc. Lond. Math. Soc. (3) \textbf{116} (2018), no. 2, 248--280.

\bibitem[RW]{RaicuWeyman}
C.~Raicu and J.~Weyman, \emph{
Local cohomology with support in generic determinantal ideals}, Algebra and Number Theory. \textbf{8}, 2014, no. 5, 1231--1257.

\bibitem[RWW]{RaicuWeymanWitt}
C.~Raicu, J.~Weyman and E.~Witt, \emph{Local cohomology with support in ideals of maximal minors and sub-maximal Pfaffians}, Advances in Mathematics. \textbf{250}, 2014, 596-610.

\bibitem[Se]{Selberg}
A.~Selberg, \emph{Bemerkninger om et multipelt integral}, Norsk. Math. Tidsskr, \textbf{24}, 1944, 71-78.

\bibitem[We]{Weyman}
J.~Weyman, \emph{Cohomology of Vector Bundles and Syzygies}, Cambridge University Press,Cambridge,2003.


\end{thebibliography}
\end{document}